\title{Non-dissipative system as limit of a dissipative one: \\ comparison of the asymptotic regimes \\ \ \\
Syst\`eme non dissipatif comme limite d'un syst\`eme dissipatif: \\ comparaison des r\'egimes asymptotiques}
\author{Ricardo Parreira da Silva}
\affil{Department of Mathematics, University of Bras\'ilia, \\
Campus Universit\'ario Darcy Ribeiro, Bras\'ilia, DF, 70910-900, Brazil
\authorcr%
\nolinkurl{rpsilva@unb.br}}%
\date{}
\newtheorem{theorem}{Theorem}[section]
\newtheorem{lemma}[theorem]{Lemma}
\newtheorem{proposition}[theorem]{Proposition}
\newtheorem{corollary}[theorem]{Corollary}
\theoremstyle{definition}
\newtheorem{definition}[theorem]{Definition}
\theoremstyle{remark}
\newtheorem{remark}[theorem]{Remark}
\numberwithin{equation}{section}
\let\oldthebibliography\thebibliography
\renewcommand\thebibliography[1]{
	\oldthebibliography{#1}
	\setlength{\parskip}{0pt}
	\setlength{\itemsep}{0pt plus 0.3ex}
	\small}
\newenvironment{poliabstract}[1]
   {\begin{abstract}}
   {\end{abstract}}
\begin{document}

\maketitle
\begin{poliabstract}{Abstract} 
 Let $\Omega \subset \mathbb{R}^n$ be a bounded smooth domain (open and connected) in $\mathbb{R}^n$.		
Given $u_0\in L^2(\Omega)$, $g\in L^\infty(\Omega)$ and $\lambda \in \mathbb{R}$, our purpose is to describe the asymptotic behavior of solutions of the family of problems
\begin{equation*}
\left\{	
\begin{array}{rcll}
\dfrac{\partial u}{\partial t} - \Delta_p u & = & \lambda u + g, &  \text{ on } \quad (0,\infty)\times \Omega, \\
u & = & 0, & \text{ in } \quad (0,\infty)\times \partial \Omega, \\
u(0, \cdot) & = & u_0, & \text{ on } \quad\Omega,		
\end{array}
\right.	
\end{equation*}
as $p \longrightarrow 2^+$, where $\Delta_p u:=\rm{div}\big(|\nabla u|^{p-2}\nabla u\big)$ denotes the $p$-laplacian operator.

\end{poliabstract}
 
\begin{poliabstract}{R\'esum\'e}
Soit $\Omega \subset \mathbb{R}^n$ un domaine lisse d\'elimit\'e (ouvert et connect\'e) dans $\mathbb{R}^n$.
\'Etant donn\'e $u_0\in L^2(\Omega)$, $g\in L^\infty(\Omega)$ et $\lambda \in \mathbb{R}$, notre but est de d\'ecrire le comportement asymptotique de solutions de la famille de probl\`emes
\begin{equation*}
\left\{	
\begin{array}{rcll}
\dfrac{\partial u}{\partial t} - \Delta_p u & = & \lambda u + g, &  \text{ en } \quad (0,\infty)\times \Omega, \\
u & = & 0, & \text{ en } \quad (0,\infty)\times \partial \Omega, \\
u(0, \cdot) & = & u_0, & \text{ en } \quad\Omega,		
\end{array}
\right.	
\end{equation*}
comme $p \longrightarrow 2^+$, ou $\Delta_p u:=\rm{div}\big(|\nabla u|^{p-2}\nabla u\big)$ d\'esigne l\'e op\'erateur $p$-laplacian.
\end{poliabstract}


{ 
\renewcommand{\thefootnote}{}%
\footnotetext{\textbf{Mathematics Subject Classification (2010):} 35B40, 35B41
		}%
\footnotetext{\footnotesize\textbf{Keywords: } dissipative systems, non-dissipative systems, global attractors, non-compact attractors, upper-semicontinuity, lower-semicontinuity}%
	}
\section{Introduction}\label{sec:intro}

Let $\Omega \subset \mathbb{R}^n$ be a bounded smooth domain in $\mathbb{R}^n$, $n \geq 1$. 
Given $u_0\in L^2(\Omega)$, $g\in L^\infty(\Omega)$ and $\lambda \in \mathbb{R}$, consider the family of problems parametrised by $p \in [2,\infty)$
\begin{equation}\tag{$E_p$}\label{eq:Ep}
\left\{	
\begin{array}{rcll}
\dfrac{\partial u}{\partial t} - \Delta_p u & = & \lambda u +g, &  \text{ on } \quad (0,\infty)\times \Omega, \\
u & = & 0, & \text{ in } \quad (0,\infty)\times \partial \Omega, \\
u(0, \cdot) & = & u_0, & \text{ on } \quad\Omega,		
\end{array}
\right.	
\end{equation} 
where $\Delta_p u := \rm{div}\big(|\nabla u|^{p-2}\nabla u\big)$ denotes the $p$-laplacian operator.

As pointed out by the authors in \cite{CCD:99}, if $p>2$ the nonlinear laplacian give rise to a strong dissipation mechanism which ensures the existence of a compact global attractor (see \cite{BV:92, Ha:88}) for the semiflow generated by \eqref{eq:Ep} in the state space $L^2(\Omega)$, unlike from the case $p=2$. 

If $p=2$, equation \eqref{eq:Ep} reads
\begin{equation}\tag{$E_2$}\label{eq:E2}
\left\{	
\begin{array}{rcll}
\dfrac{\partial u}{\partial t} - \Delta u & = & \lambda u + g, & \text{ on } \quad (0,\infty)\times \Omega, \\
u & = & 0, & \text{ in } \quad (0,\infty)\times \partial \Omega, \\
u(0, \cdot) & = & u_0, & \text{ on } \quad \Omega,		
\end{array}
\right.	
\end{equation}
and is not difficult to show the existence of unbounded orbits for the semiflow generated by \eqref{eq:E2} if $\lambda$ is large enough (see Section \ref{sec:p2}). 
Such behavior is typical of systems referred to in the recent literature as slowly non-dissipative dynamical systems (see \cite{BCP:18, CP:18, PR:16}). 

We must recall that a compact global attractor is a nonempty maximal compact invariant set attracting each bounded subset in the state space. 
In the context of slowly non-dissipative systems, which are characterised by the existence of unbounded orbits with the absence of finite time blowup, one cannot expect compactness for a proper extension of the notion of global attractor.
In such case, the object to be considered would be nonempty, maybe unbounded, but minimal in the category of invariant sets which attracts all bounded subsets in the state space. 
This is referred to as non-compact global attractor (see Definition \ref{def:unbattr}) and \cite{BCP:18, CP:18, PR:16}).

Our aim in this note is to address to the stability with respect to the parameter $p$ of such objets, comparing compact attractors in the dissipative case $p>2$ with the non-compact attractors in the non-dissipative limiting case $p=2$ in terms of the Hausdorff semi-distance between then (see Section \ref{sec:cont-attr}).

Despite of the intense development of the theory of dissipative systems, the called non-dissipative systems still needs a deeper understanding and we believe that studies as we carrie out on this paper may be of great motivation for that.

This paper is structured as follows. 
In Section \ref{sec:pnot2} we address to the dissipative case. 
In particular we setup the functional framework and present preliminary results concerned with the asymptotic behavior of solutions of \eqref{eq:Ep}. 
In Section \ref{sec:p2} we address to the non-dissipative case. 
We show the existence of unbounded orbits and their behaviour at infinity. 
We consider the notion of non-compact global attractor and by considering Poincar\'e's compactification as in \cite{He:13}, it is possible to interpret such attractors as heteroclinics orbits connecting equilibrium points at infinite.

In Section \ref{sec:cont} we address to the continuity with respect to the parameter $p$ of the orbits as well of the compact and non-compact global attractors.
\section{The dissipative case: $p>2$}\label{sec:pnot2} 

Let $H$ be the Hilbert space $L^2(\Omega)$ endowed with the standard inner product and $V$ the reflexive space $W_0^{1,p}(\Omega)$ endowed, thanks to the Poincar\'e inequality, with the equivalent norm $\displaystyle \|u\|_V= \left[\int_\Omega |\nabla u(x)|^p dx\right]^\frac1p$. 
Identifying $H$ with $H^*$, its topological dual space, we can write
$$
V \stackrel{d} \hookrightarrow  H \stackrel{d} \hookrightarrow  V^*,
$$
i.e., each space is dense in the following one and the inclusion maps are continuous. 
In this particular case, $V^*=W^{-1,q}(\Omega)$, where $q$ is the conjugate exponent of $p$, $1/p + 1/q =1$.

The inner product in $H$ will be denoted by $\langle \cdot,\cdot \rangle$ whereas $\langle \cdot, \cdot \rangle_{V^*,V}$ will denote the duality product between $V^*$ and $V$.
As a consequence of the previous identification
$$
\langle u,v \rangle  = \langle u, v \rangle_{V^*,V}, \quad \text{for all} \;  u \in H \;  \text{and} \; \text{for all} \; v \in V.
$$
Let us now consider the (nonlinear) operator $ A_p : V \to V^*$ defined by
$$
\langle A_p u, v \rangle_{V^*,V} := \int_{\Omega} |\nabla u|^{p-2} \nabla u \cdot \nabla v  \, dx, \quad \forall \, v \in V,
$$
where for $\xi, \eta \in \mathbb{R}^n$, $\xi \cdot \eta$ denotes the standard inner product in $\mathbb{R}^n$.

It is well known (see \cite{Lions:69, Te:88}) that operator $A_p$ satisfies the following structural properties:
\begin{enumerate}
	\item [$i)$] The operator $A_p : V \to V^*$ is maximal monotone, coercive and hemicontinuous;
	
	\item [$ii)$] There are constants $c_1, \, c_2 >0$ such that for all $v \in V$ the following conditions hold:
	$$
	\langle A_p v, v \rangle_{V^*,V} \geq c_1 \|v\|^p_V \quad \text{ and } \quad \|A_p v \|_{V^*} \leq c_2 \|v\|^{p-1}_V.
	$$ 
\end{enumerate}
This allow us, in particular, to conclude that the set $D(A_p):=\{u \in V: A_p u \in H\}$ is dense in $H$ (see \cite[Lemma 1]{CCD:99}).

Considering the $H$-realization of the operator $A_p$, i.e. the restriction of the operator $A_p$ on $D(A_p)$ seen as an operator on $H$ (which we will still denote by $A_p$), it is well known (see \cite[Example 2.3.7]{Br:73}) that $A_p$ is a maximal monotone operator in $H$. 
Therefore, $A_p$ can also be seen as the subdifferential of the lower semicontinuous convex functional $J_p: H \to (-\infty, \infty]$ defined by
$$
J_p(u):=\begin{cases}
	\dfrac1p\,  \|u\|_V^p, & \text{ if } u \in V, \\
	\infty,   & \text{ otherwise}.
\end{cases}
$$

The problem \eqref{eq:Ep} can be now be written in the form
\begin{equation}\label{eq:AEp}	
\begin{array}{rcl}
\dfrac{du}{dt} + A_p u  & = & B(u), \quad t>0, \\
u(0) & = & u_0 \in H,  	
\end{array}
\end{equation} 
where $B:H \to H$ is the globally Lipschitz map defined as $B(u):=\lambda u +g$.

\begin{definition}
A function $u \in \mathcal{C}([0,\infty); H)$ is a strong solution to \eqref{eq:AEp} starting at $u_0\in H$ if $u$ is absolutely continuous in any compact subinterval of $(0,\infty)$, $u(t) \in D(A_p)$ for a.e. $t \in (0,\infty)$, $u(0)=u_0$ and
\begin{equation}\label{eq:s1}
\dfrac{du}{dt}(t) + A_p u(t) = B(u(t)), \quad \text{ for a.e. } \, t \in (0,\infty).
\end{equation}
\end{definition}

\begin{theorem}[\cite{KS:14}, Theorem 2.8]\label{teo:KS}
Let $p > 2$ be fixed. For all $u_0 \in H$ there exist a unique strong solution $u_p \in \mathcal{C}([0,\infty); H)$ to \eqref{eq:AEp} starting at $u_0$.
\end{theorem}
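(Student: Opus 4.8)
The plan is to exploit the two structural facts already recorded: that $A_p=\partial J_p$ is maximal monotone in $H$ with $\overline{D(A_p)}=H$, and that $B$ is globally Lipschitz with constant $|\lambda|$. Accordingly I would decouple the argument into an abstract existence theory for the subdifferential flow governed by $A_p$ and a fixed-point scheme absorbing the Lipschitz term $B$. Uniqueness I would dispose of first and directly: if $u$ and $\tilde u$ are two strong solutions, subtracting the identities \eqref{eq:s1}, pairing with $u-\tilde u$ in $H$, and using the monotonicity of $A_p$ together with $\|B(u)-B(\tilde u)\|\le |\lambda|\,\|u-\tilde u\|$ gives $\tfrac12\tfrac{d}{dt}\|u-\tilde u\|^2\le |\lambda|\,\|u-\tilde u\|^2$, whence $u\equiv\tilde u$ by Gronwall.

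For existence I would first invoke the regularity theorem for evolution equations governed by a subdifferential (Br\'ezis, \cite{Br:73}): for $u_0\in\overline{D(A_p)}=H$ and $f\in L^2(0,T;H)$, the problem $\tfrac{du}{dt}+A_pu\ni f$ admits a unique solution $u\in\mathcal C([0,T];H)$ which is absolutely continuous on each compact subinterval of $(0,T]$, satisfies $u(t)\in D(A_p)$ for a.e.\ $t$, and obeys the equation a.e.\ This is exactly the regularity demanded by the definition of strong solution, and it is the content that will be transported to \eqref{eq:AEp}.

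The Lipschitz term is then incorporated by a contraction argument. Writing $S$ for the solution map $v\mapsto u$ of $\tfrac{du}{dt}+A_pu= B(v)$, $u(0)=u_0$, furnished by the previous step, I would show that on $\mathcal C([0,T];H)$, for small $T$, the map $v\mapsto S(v)$ is a contraction: pairing the difference of two such equations with $S(v_1)-S(v_2)$, discarding the monotone term, and integrating gives $\sup_{[0,T]}\|S(v_1)-S(v_2)\|\le |\lambda|\,T\,\sup_{[0,T]}\|v_1-v_2\|$, contractive once $T<1/|\lambda|$. This produces a local strong solution; since $B$ is globally Lipschitz, pairing the equation with $u$ and applying Gronwall yields an a priori bound of the form $\|u(t)\|\le (\|u_0\|+Ct)\,e^{|\lambda|t}$, which precludes finite-time blow-up and lets the local solution be continued to all of $[0,\infty)$.

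I expect the genuine content to be concentrated in the subdifferential regularity theorem, namely that an arbitrary $u_0\in H=\overline{D(A_p)}$ — not merely $u_0\in D(A_p)$ — yields a strong solution with $u(t)\in D(A_p)$ for a.e.\ $t$ and absolute continuity on compact subintervals of $(0,\infty)$. This is exactly where $A_p=\partial J_p$ being a subdifferential, rather than a general maximal monotone operator, is indispensable, the mechanism being the smoothing estimate $\int_0^T t\,\|\tfrac{du}{dt}(t)\|^2\,dt<\infty$. Once this is in hand the map $S$ already returns strong solutions, so the fixed-point step adds no further regularity difficulty; one need only check that the inhomogeneity $B(v)\in\mathcal C([0,T];H)\subset L^2(0,T;H)$ remains in the admissible class at each iterate, which is immediate.
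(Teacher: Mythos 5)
Your proposal is correct, but note that the paper itself offers no proof of this statement: Theorem \ref{teo:KS} is imported wholesale from \cite{KS:14} (Theorem 2.8), so there is no internal argument to compare yours against. What you have written is a self-contained substitute, and it rests on exactly the machinery that the cited literature ultimately relies on: Br\'ezis's theorem on evolution equations governed by subdifferentials (\cite{Br:73}) gives, for $u_0\in\overline{D(J_p)}=\overline{V}=H$ and $f\in L^2(0,T;H)$, a unique strong solution of $\frac{du}{dt}+\partial J_p u\ni f$ with the smoothing effect you correctly identify as the crux (this is precisely where $A_p$ being a subdifferential, not just maximal monotone, is used --- for a general maximal monotone operator one would only get weak solutions from $u_0\in\overline{D(A_p)}$); the globally Lipschitz perturbation $B(u)=\lambda u+g$ is then absorbed by your contraction on $\mathcal{C}([0,T];H)$ with $T<1/|\lambda|$, the case $\lambda=0$ being trivial since $S$ is then constant. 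Your uniqueness and continuation arguments are also sound.

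Two routine points deserve a sentence in a written version. First, in the uniqueness and contraction estimates the function $t\mapsto\|w(t)\|_H^2$ is absolutely continuous only on compact subintervals of $(0,\infty)$, so one should integrate the differential inequality from $\delta>0$ and let $\delta\to 0^+$ using the continuity of $w$ at $t=0$ (where $w(0)=0$). Second, in the globalization step the contraction time $T<1/|\lambda|$ is independent of the initial datum, so one may simply restart at time $T$ and iterate; since the smoothing effect places $u(t)$ in $D(J_p)=V$ for $t>0$, the restarted solution is absolutely continuous up to the gluing point and the concatenation is again a strong solution. In particular the a priori exponential bound you derive, while correct, is not actually needed for continuation. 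With these remarks supplied, your sketch is a complete and correct proof of the quoted theorem.
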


Theorem \ref{teo:KS} allow us to consider, for each $p>2$, the (nonlinear) semigroup $\{T_p(t)\}_{t\geq 0} \subset \mathcal{C}(H;H)$ 
$$
T_p(t):u_0 \in  H \mapsto u_p(t) \in H, 
$$
the strong solution to \eqref{eq:AEp} starting at $u_0$ at the time $t$.

\begin{definition}[\cite{Ha:88}]
An subset $\mathcal{A}_p \subset H$ is said to be a compact global attractor for the semigroup $\{T_p(t)\}_{t \geq 0}$ if $\mathcal{A}_p$ is a compact subset of $H$, $T_p(t)\mathcal{A}_p = \mathcal{A}_p$ for all $t\geq 0$ and $\displaystyle \lim_{t\to \infty} \, \sup_{b \in \mathcal{B}} \inf_{a\in \mathcal{A}_p} \|T_p(t)b - a\|_{H}=0$, for all bounded subset $\mathcal{B} \subset H$. 
\end{definition}

\begin{definition}[\cite{Ha:88}]
A complete orbit for the semigroup 	$\{T_p(t)\}_{t \geq 0}$ through $u_0 \in H$ is a continuous function $\phi:\mathbb{R} \to H$ satisfying $\phi(0)=u_0$ and $T(t)\phi(s)=\phi(t+s)$, for all $s \in \mathbb{R}$ and $t \geq 0$.
\end{definition}

\begin{theorem}[\cite{Ha:88}]\label{teo:carac-att}
If $\mathcal{A}_p \subset H$ is the compact global attractor for the semigroup $\{T_p(t)\}_{t \geq 0}$, then
$$
\mathcal{A}_p = \{u_0 \in H: \text{ there exists a bounded complete orbit for }	 \{T_p(t)\}_{t \geq 0} \text{ through } u_0\}.
$$
\end{theorem}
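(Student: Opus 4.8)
The plan is to prove the two set inclusions separately. Write $\mathcal{K}:=\{u_0 \in H: \text{there exists a bounded complete orbit for } \{T_p(t)\}_{t\ge 0}\text{ through } u_0\}$; the goal is to show $\mathcal{A}_p = \mathcal{K}$.

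First I would establish $\mathcal{A}_p \subseteq \mathcal{K}$. Fix $u_0 \in \mathcal{A}_p$. The forward branch $t \mapsto T_p(t) u_0$, $t \geq 0$, already lies in $\mathcal{A}_p$ by invariance, so the work is to build the backward branch. For this I would use the surjectivity of $T_p(1)$ restricted to $\mathcal{A}_p$, which is immediate from $T_p(1)\mathcal{A}_p = \mathcal{A}_p$: choose inductively a sequence $(v_k)_{k \geq 0} \subset \mathcal{A}_p$ with $v_0 = u_0$ and $T_p(1) v_{k+1} = v_k$. Setting $\phi(s) := T_p(s+k)\, v_k$ for $s \geq -k$, the semigroup property
$$
T_p(s+k+1)\,v_{k+1} = T_p(s+k)\,T_p(1)\,v_{k+1} = T_p(s+k)\,v_k
$$
shows that the definition is independent of $k$ on the overlaps, so $\phi : \mathbb{R} \to \mathcal{A}_p$ is well defined. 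Continuity of $\phi$ follows from the continuity of $t \mapsto T_p(t)v$ guaranteed by Theorem \ref{teo:KS}, and one checks directly that $\phi(0) = u_0$ and $T_p(t)\phi(s) = \phi(t+s)$ for all $s\in\mathbb{R}$, $t\ge 0$. Thus $\phi$ is a complete orbit through $u_0$ contained in the compact set $\mathcal{A}_p$, hence bounded, giving $u_0 \in \mathcal{K}$.

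Next I would prove $\mathcal{K} \subseteq \mathcal{A}_p$, which is where the attraction property enters. Let $u_0 \in \mathcal{K}$, let $\phi$ be a bounded complete orbit through $u_0$, and set $\mathcal{B} := \overline{\phi(\mathbb{R})}$, a bounded subset of $H$. The orbit identity gives, for every $t \geq 0$, that $u_0 = \phi(0) = \phi\big(t + (-t)\big) = T_p(t)\phi(-t)$, where $\phi(-t) \in \mathcal{B}$. Therefore
$$
\dist(u_0, \mathcal{A}_p) = \inf_{a \in \mathcal{A}_p}\|T_p(t)\phi(-t) - a\|_H \leq \sup_{b \in \mathcal{B}} \inf_{a \in \mathcal{A}_p}\|T_p(t)b - a\|_H,
$$
and the right-hand side tends to $0$ as $t \to \infty$ by the attraction property applied to the bounded set $\mathcal{B}$. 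Hence $\dist(u_0, \mathcal{A}_p) = 0$, and since $\mathcal{A}_p$ is compact, and in particular closed, we conclude $u_0 \in \mathcal{A}_p$.

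I expect the only delicate point to be the coherent backward construction in the first inclusion: one must verify that the individual choices $(v_k)$ patch together into a single continuous function on all of $\mathbb{R}$ and genuinely satisfy the translation identity defining a complete orbit. Everything else reduces to the invariance $T_p(t)\mathcal{A}_p = \mathcal{A}_p$, the semigroup property, and the attraction property together with the closedness of the compact set $\mathcal{A}_p$.
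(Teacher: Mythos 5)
Your proof is correct, and it is essentially the standard argument the paper relies on: the statement is quoted from Hale \cite{Ha:88} without proof, but the paper reproduces exactly your two ingredients---the backward pull-back construction via $T(1)$-surjectivity on the invariant set, and the attraction estimate $\dist(u_0,\mathcal{A}) \le \sup_{b}\inf_{a}\|T(t)b-a\|_H \to 0$ applied to $u_0 = T(t)\phi(-t)$---in its proof of the analogous non-compact result, Theorem \ref{teo:carac-att-unbou}. Your attention to well-definedness and continuity of the patched backward branch is exactly the point the paper's version also verifies, so there is nothing to add.
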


\begin{theorem}\label{teo:CCD}
Let $p > 2$ be fixed. The semigroup $\{T_p(t)\}_{t \geq 0}$ is $(H,V)$-dissipative, i.e., there exists $R=R(p)>0$ such that for all bounded subset $\mathcal{B} \subset H$, there exists $T=T(\mathcal{B})>0$ such that $\displaystyle \sup_{\underset{v \in \mathcal{B}}{t\geq T}} \|T_p(t)v\|_V <R$.
In particular, $\{T_p(t)\}_{t \geq 0}$ has a compact global attractor $\mathcal{A}_p$. Moreover, for any $q \in (p,\infty)$, $R$ can be chosen uniformly for $2<p\le q$.
\end{theorem}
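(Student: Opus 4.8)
The plan is to produce a bounded absorbing set in $V$ and then exploit the compactness of the embedding $V=W_0^{1,p}(\Omega)\hookrightarrow H=L^2(\Omega)$. I would proceed in three stages. First, the $H$-estimate: testing \eqref{eq:s1} against $u(t)$ and using property $(ii)$ together with the identity $\langle A_p u,u\rangle_{V^*,V}=\|u\|_V^p$, I get
\[
\tfrac12\tfrac{d}{dt}\|u\|_H^2 + \|u\|_V^p \le \lambda\|u\|_H^2 + \langle g,u\rangle .
\]
Since $\Omega$ is bounded, $V\hookrightarrow H$ continuously with a constant $c_P=c_P(p)$ that stays bounded on $(2,q]$, so $\|u\|_V^p\ge c_P^{-p}\|u\|_H^p$. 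Using this and absorbing the lower-order terms $\lambda\|u\|_H^2+\langle g,u\rangle$ by Young's inequality, I obtain a differential inequality of the form $y'\le -a\,y^{p/2}+b$ for $y=\|u\|_H^2$, with $p/2>1$. Comparison with the scalar ODE shows that every orbit enters, after a time $T_1=T_1(\mathcal{B})$, a fixed ball $\{\|u\|_H\le \rho_H\}$; this is $H$-dissipativity.

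Second, I would upgrade this to a $V$-bound using the gradient structure. Integrating the $H$-estimate over $[t,t+1]$ and using $\|u\|_H\le\rho_H$ gives $\int_t^{t+1}\|u(s)\|_V^p\,ds\le M$ for $t\ge T_1$, so on each unit interval there is an instant $s^\ast$ with $\|u(s^\ast)\|_V$ controlled. Because $A_p=\partial J_p$, testing against $u_t$ yields $\frac{d}{dt}\Phi(u(t))=-\|u_t\|_H^2\le 0$ for the Lyapunov functional
\[
\Phi(u)=\tfrac1p\|u\|_V^p-\tfrac{\lambda}{2}\|u\|_H^2-\langle g,u\rangle ,
\]
which, again by Young's inequality, is coercive: $\Phi(u)\ge \tfrac{1}{2p}\|u\|_V^p-C$. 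Evaluating $\Phi$ at $s^\ast$ (where it is bounded in terms of $\rho_H$ and $M$) and using monotonicity, $\Phi(u(t))\le \Phi(u(s^\ast))$ for all later $t$, whence $\|u(t)\|_V\le R$ for $t\ge T_1+1$. This is the claimed $(H,V)$-dissipativity. The justification of the chain rule $\frac{d}{dt}J_p(u)=\langle A_p u,u_t\rangle$ along the trajectory rests on the regularity of strong solutions and the convexity of $J_p$ (Brezis).

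Third, the compact attractor: the $(H,V)$-absorbing ball is, by Rellich--Kondrachov, relatively compact in $H$, so the semigroup admits a compact absorbing set and existence of $\mathcal{A}_p$ follows from the standard theory in \cite{Ha:88} (its characterisation being Theorem \ref{teo:carac-att}).

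For the uniformity in $p$ I would track every constant: the coercivity constant in $(ii)$ is in fact exactly $1$, while the embedding constant $c_P(p)$ and the domain factor $|\Omega|^{(p-2)/(2p)}$ are bounded on $(2,q]$. The one delicate point --- and the main obstacle --- is the absorption of the non-dissipative term $\lambda\|u\|_H^2$: the constant $b$ in $y'\le -a\,y^{p/2}+b$, equivalently the radius at which $\|u\|_V^p$ overtakes $\lambda\|u\|_H^2$, degenerates as $p\to 2^+$, since the dissipation exponent approaches the critical value $2$. I expect this to be the crux: one must quantify how the nonlinear Poincaré constant $\mu_1(p)$ (which tends to the first Dirichlet eigenvalue as $p\to 2^+$) enters the balance and verify that a single $R$ can be retained throughout the range, which is precisely where the passage from the dissipative regime $p>2$ to the non-dissipative regime $p=2$ becomes visible.
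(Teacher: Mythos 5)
For each fixed $p>2$ your three-step scheme is correct and is, in substance, exactly the argument the paper delegates to its references: the paper's own proof of Theorem \ref{teo:CCD} consists of citing \cite[Theorem 1]{CCD:99} for the attractor and \cite[\S 5.2]{Te:88} for the $V$-estimate. Your comparison inequality $y'\le -a\,y^{p/2}+b$ is precisely where Ghidaglia's inequality (Lemma \ref{lem:ghidaglia}, quoted in the appendix from \cite{Te:88}) enters, and it even gives an entry time independent of $\mathcal{B}$; where Temam passes from $\int_t^{t+1}\|u(s)\|_V^p\,ds\le M$ to the $V$-bound by the uniform Gronwall lemma, you use monotonicity of $\Phi$ along trajectories, which is legitimate here because $B(u)=\lambda u+g$ is itself a gradient, so the two routes are equivalent; the chain rule you defer to Brezis is standard for subdifferential evolutions with Lipschitz perturbation. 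Rellich--Kondrachov then gives a compact absorbing set and the attractor follows from \cite{Ha:88}. So the first two assertions of the theorem are fully established by your proposal.

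The genuine gap is the final ``Moreover'' clause, which you name but leave unresolved --- and your instinct that the balance degenerates as $p\to 2^+$ is correct, in fact fatally so when $\lambda$ is large. Tracking your own constants: absorbing $\lambda y$ into $\tfrac{a}{2}y^{p/2}$ by Young costs $b$ of order $(c\lambda/\mu_p)^{p/(p-2)}$ with $\mu_p\to\lambda_1$, so the absorbing radius $(b/a)^{2/p}$ stays bounded on $(2,q]$ exactly when $\lambda<\lambda_1$; in that range you can bypass Young entirely (for $y\ge 1$ one has $y^{p/2}\ge y$, so the linear term is absorbed with a margin uniform in $p$) and your scheme does deliver a uniform $R$. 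For $\lambda>\lambda_1$, however, no bookkeeping can rescue the claim, because it is false: taking $g=0$, the functional $u\mapsto\tfrac1p\|u\|_V^p-\tfrac{\lambda}{2}\|u\|_H^2$ is coercive and weakly lower semicontinuous on $V$, hence has a minimizer $u_p^*$, nontrivial since testing with $t\varphi_1$ at the optimal $t_*=\bigl(\lambda/\|\nabla\varphi_1\|_{L^p}^p\bigr)^{1/(p-2)}$ gives a negative value; $u_p^*$ is an equilibrium, hence lies in $\mathcal{A}_p$, and comparing the minimum value with the trivial lower bound $-\tfrac{\lambda}{2}\|u_p^*\|_H^2$ yields $\|u_p^*\|_H^2\ge \tfrac{p-2}{p}\,t_*^2\to\infty$ as $p\to 2^+$, since $\|\nabla\varphi_1\|_{L^p}^p\to\lambda_1<\lambda$. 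A uniform $V$-bound would force a uniform $H$-bound (the embedding constants are uniform on $(2,q]$), so uniform $R$ is impossible for $\lambda>\lambda_1$ --- consistent with the unbounded orbits of Proposition \ref{prop:attrac2} and the grow-up phenomenon in \cite{BCP:18}. Thus your proof establishes the theorem with the ``Moreover'' restricted to $\lambda<\lambda_1$, and the missing step is not a technicality you failed to close; to be fair, the paper's citation-proof does not supply the uniformity either, since neither \cite{CCD:99} nor \cite{Te:88} tracks the constants in $p$ or includes the non-dissipative term $\lambda u$.
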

\begin{proof}
For the proof of existence of a compact global attractor in $H$ in a more general setting see \cite[Theorem 1]{CCD:99}. For the a priori estimate in $V$ see \cite[\textsection 5.2]{Te:88}.

\end{proof}

The next Proposition shows how simple the dynamics of the system \eqref{eq:AEp} is in the case $\lambda \leq 0$.

\begin{proposition}\label{prop:atratriv}
If $\lambda \leq 0$ then $\mathcal{A}_p=\{u^*_p\}$, where $u^*_p$ is the (unique) equilibrium point of \eqref{eq:Ep}, i.e., $u_p^*\in D(A_p)$ and $A_p u_p^* = \lambda u_p^* + g$. 
\end{proposition}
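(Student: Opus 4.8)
The plan is to first construct the equilibrium $u^*_p$ and prove its uniqueness, and then to show that it attracts every trajectory at a rate strong enough to exclude any nonconstant bounded complete orbit; the identity $\mathcal{A}_p=\{u^*_p\}$ will then follow from the characterization in Theorem~\ref{teo:carac-att}.

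For the equilibrium I would solve the stationary equation $A_p u-\lambda u=g$ in $H$. Since $\lambda\le 0$, the map $u\mapsto-\lambda u$ is monotone, bounded and everywhere defined on $H$, so $A_p-\lambda\,\id$ is again maximal monotone on $H$. Using property $ii)$, the Poincar\'e inequality $\|u\|_H\le c\,\|u\|_V$ and $p>2$, one gets
$$
\langle (A_p-\lambda\,\id)u,u\rangle \;\ge\; c_1\|u\|_V^{\,p}\;\ge\; c_1 c^{-p}\|u\|_H^{\,p},
$$
so that $A_p-\lambda\,\id$ is coercive on $H$; the surjectivity theorem for coercive maximal monotone operators then yields $u^*_p\in D(A_p)$ with $A_pu^*_p-\lambda u^*_p=g$. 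Uniqueness is immediate: if $u_1,u_2$ both solve it, pairing the difference with $u_1-u_2$ gives $\langle A_pu_1-A_pu_2,u_1-u_2\rangle-\lambda\|u_1-u_2\|_H^2=0$, and since both terms are nonnegative while $A_p$ is strictly monotone, $u_1=u_2$.

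Next I would measure the distance of a trajectory to $u^*_p$. Writing $w(t):=T_p(t)u_0-u^*_p$ and $\psi(t):=\|w(t)\|_H^2$, subtracting the stationary identity from \eqref{eq:s1} and pairing with $w(t)$ gives, for a.e.\ $t$,
$$
\tfrac12\,\psi'(t)=-\langle A_pu(t)-A_pu^*_p,\,w(t)\rangle+\lambda\|w(t)\|_H^2 .
$$
The crucial ingredient is the elementary vector inequality $\big(|\xi|^{p-2}\xi-|\eta|^{p-2}\eta\big)\cdot(\xi-\eta)\ge c_p|\xi-\eta|^p$, valid for $p\ge 2$, which bounds the monotone term below by $c_p\|w\|_V^{\,p}$ and hence, via Poincar\'e and the embedding $V\hookrightarrow H$, by $C\|w\|_H^{\,p}$. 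Since $\lambda\le 0$ the last term is $\le 0$, so, uniformly in $\lambda\le 0$, we obtain the single differential inequality
$$
\psi'(t)\;\le\;-2C\,\psi(t)^{p/2},\qquad p/2>1 .
$$

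Finally I would exploit this inequality in two directions. Forward in time it forces $\psi(t)\to0$, so every trajectory converges to $u^*_p$. For the attractor, by Theorem~\ref{teo:carac-att} it suffices to identify the bounded complete orbits: the constant $\phi\equiv u^*_p$ is one, and I claim it is the only one. Indeed, if $\phi$ is a bounded complete orbit with $\psi(\tau_0)=\|\phi(\tau_0)-u^*_p\|_H^2>0$ for some $\tau_0$, then $\psi$ is nonincreasing, so $\psi(\tau)\ge\psi(\tau_0)$ for $\tau\le\tau_0$ and the inequality gives $\psi'(\tau)\le-2C\,\psi(\tau_0)^{p/2}<0$ there; integrating from $\tau$ to $\tau_0$ yields $\psi(\tau)\to+\infty$ as $\tau\to-\infty$, contradicting boundedness of $\phi$. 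Hence $\phi\equiv u^*_p$ and $\mathcal{A}_p=\{u^*_p\}$. The main obstacle is the borderline case $\lambda=0$, where the exponential Gronwall decay available for $\lambda<0$ degenerates; this is precisely what the nonlinear coercivity inequality for the $p$-Laplacian repairs, upgrading the estimate to the superlinear differential inequality above, which suffices for both the forward convergence and the backward-blowup argument.
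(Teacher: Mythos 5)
Your proof is correct, and its engine is the same as the paper's: Tartar's inequality (Lemma~\ref{lem:tartar}) plus Poincar\'e, yielding a superlinear decay law for $\psi(t)=\|u_p(t)-u_p^*\|_H^2$. The surrounding architecture, however, differs in three ways. First, you actually construct the equilibrium: the paper takes $u_p^*$ for granted, while you obtain it from surjectivity of the coercive maximal monotone operator $A_p-\lambda\,\id$ (legitimate, since for $\lambda\le 0$ the map $-\lambda\,\id$ is monotone, Lipschitz and everywhere defined), with uniqueness from strict monotonicity. Second, the paper splits into cases: for $\lambda<0$ it keeps the term $\lambda\|w\|_H^2$ and gets exponential decay by Gronwall, invoking the nonlinear term and Ghidaglia's inequality (Lemma~\ref{lem:ghidaglia}) only when $\lambda=0$; you instead discard $\lambda\|w\|_H^2\le 0$ outright and run the single inequality $\psi'\le -2C\psi^{p/2}$ uniformly in $\lambda\le 0$, which is cleaner but forfeits the exponential rate when $\lambda<0$ --- harmless here, since the resulting polynomial bound is independent of the initial datum and hence still gives attraction uniform over bounded sets. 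Third, your endgame goes through the characterization in Theorem~\ref{teo:carac-att}: you rule out nonconstant bounded complete orbits by the backward-blowup computation ($\psi$ nonincreasing together with $\psi'(\tau)\le -2C\psi(\tau_0)^{p/2}$ forces $\psi(\tau)\to\infty$ as $\tau\to-\infty$), whereas the paper concludes directly from forward convergence of all trajectories, whose rate is uniform on bounded sets. Your route is slightly longer but buys two things: it records the existence and uniqueness of $u_p^*$, which the statement asserts but the paper's proof does not establish, and it shows the conclusion needs only the pointwise-in-time differential inequality along complete orbits, not any quantitative uniformity of forward attraction.
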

\begin{proof}
Setting $w(t) := u_p(t) - u_p^*$, one immediately obtains that 
\begin{equation}\label{eq:2.3}
\dfrac{dw}{dt}=A_p u_p^* - A_p u_p + \lambda w .	
\end{equation} 
Moreover, it follows from Tartar's inequality \eqref{lem:tartar} that
$$
\langle A_p u_p^* - A_p u_p, w \rangle_{V^*,V}  = \langle  |\nabla u_p^*|^{p-2} \nabla u_p^* - |\nabla u_p|^{p-2} \nabla u_p, \nabla w \rangle \leq -2^{2-p} \|\nabla w\|_{H}^p \le  -2^{2-p}\lambda_1^{p/2} \|w\|_{H}^p, 
$$
where $0<\lambda_1$ denotes the first eigenvalue of the Dirichlet Laplacian. 

Taking in \eqref{eq:2.3} the inner product in $H$ with $w$ one obtains that
$$
\dfrac12\dfrac{d}{dt}\|w \|_{H}^2 \leq \lambda \| w\|_{H}^2, \quad t>0, \qquad \text{ if } \lambda \neq 0
$$
and
$$
\dfrac12\dfrac{d}{dt}\|w \|_{H}^2 + 2^{2-p}\lambda_1^{p/2} (\|w\|_{H}^2)^{p/2} \leq 0,   \quad t>0, \qquad \text{ if } \lambda = 0.
$$ 

Therefore,
 $\|w(t)\|_H^2\le \|u_0 - u^*_p\|^2_H \, e^{2\lambda t} \stackrel{t \to \infty}{\longrightarrow} 0$, if $\lambda \neq 0$, and thanks to Ghidaglia's Inequality \ref{lem:ghidaglia}, $\|w(t)\|_H^2\le  \left(2^{2-p}\lambda_1^{p/2}(p-2)t \right)^{-2/(p-2)} \stackrel{t \to \infty}{\longrightarrow} 0$, if $\lambda =0$.
	
\end{proof}

In virtue of Proposition \ref{prop:atratriv} a more complicated dynamics for system \eqref{eq:Ep}, $p>2$, is expected only for $\lambda >0$.

\section{The non-dissipative case: $p=2$}\label{sec:p2} 

Let $A_2:D(A_2) \subset H \to H$ be the Laplace operator with homogeneous Dirichlet boundary conditions, i.e., $A_2:=-\Delta$ with domain $D(A_2)= W_0^{1,2}(\Omega)\cap W^{2,2}(\Omega)$. It is well know (see \cite{He:81}) that $A_{\lambda}:=\lambda I - A_2$ is the generator of an analytic semigroup $\{e^{A_{\lambda}t}\}_{t\geq 0}$ in $H$ and the (classical) solution of \eqref{eq:AEp}, in the case $p=2$, is given by
$$
u_2(t) = e^{A_{\lambda}t}u_0 + \int_0^t e^{A_{\lambda}(t-s)}g\,ds, \quad t\geq 0.
$$
As in the previous Section, we can define the semigroup $\{T_2(t)\}_{t\geq 0} \subset \mathcal{C}(H;H)$ by
$$
T_2(t):u_0 \in  H \mapsto u_2(t) \in H, 
$$
the solution to \eqref{eq:AEp} (in the case $p=2$) starting at $u_0$ at the time $t$.

Since $\|e^{-A_2 t}\|_{\mathscr{L}(H)} \le e^{-\lambda_1 t}$, for all $t\geq 0$, where $\lambda_1$ denotes the first eigenvalue of the operator $A_2$, we have that $\|e^{A_{\lambda} t}\|_{\mathscr{L}(H)} \le e^{(\lambda-\lambda_1) t}$, for all $t\geq 0$. 
So, if $u_2^*$ is an equilibrium point of \eqref{eq:E2}, i.e., $u_2^* \in D(A_\lambda)$, and $A_\lambda u_2^*+g =0$, setting $w(t)=:u_2(t)-u_2^*$, we can easily see that $\dfrac{dw}{dt}= A_\lambda w$ and $w(t) = e^{A_{\lambda}t}w(0)$, for all $t\geq 0$.
Therefore
\begin{equation}\label{eq:3.1}
\|u(t)-u_2^*\|_{H} \le e^{(\lambda-\lambda_1) t}\|u_0 -u_2^*\|_{H}.
\end{equation}

On the other hand if $\{\varphi_j\}_{j=1}^\infty$ denotes the ortonormal basis of $H$ consisting of the (ordered) eigenvalues of the operator $A_2$, solution $u_2$ can be expressed as 
\begin{equation}\label{eq:3.1a}
u_2(t) = \sum_{j=1}^\infty \hat{u}_{2_j}(t)\,\varphi_j,
\end{equation}
where the Fourier modes $\hat{u}_{2_j}(t)$ are given by 
\begin{equation}\label{eq:fourmod}
\hat{u}_{2_j}(t) =e^{(\lambda-\lambda_j)t} \langle u_0, \varphi_j \rangle + \int_0^t e^{(\lambda-\lambda_j)(t-s)}ds\langle g, \varphi_j \rangle.
\end{equation}
It follows from Parseval's identity that 
\begin{equation}\label{eq:3.3}
\|u_2(t)\|^2_{H} \geq \begin{cases}
 	\displaystyle \left[ e^{(\lambda-\lambda_j)t} \langle u_0, \varphi_j \rangle + \left(\dfrac{e^{(\lambda-\lambda_j)t}-1}{\lambda-\lambda_j}\right) \langle g, \varphi_j \rangle  \right]^2, \quad \text{if }  \lambda \neq \lambda_j, \\
 	\displaystyle \left[\langle u_0, \varphi_j \rangle + t \langle g, \varphi_j \rangle  \right]^2, \quad \text{if }  \lambda = \lambda_j.  
 \end{cases}
\end{equation}
From this discussion we derive the following Proposition 
\begin{proposition}\label{prop:attrac2} \

\begin{enumerate}
	\item If $\lambda < \lambda_1$, the semigroup $\{T_2(t)\}_{t\geq 0}$ has a compact global attractor $\mathcal{A}_2=\{u^*_2\}$;
	
	\item If $\lambda \geq \lambda_1$ and $g \neq 0$, the semigroup $\{T_2(t)\}_{t\geq 0}$ admits unbounded orbits, i.e., there exist $u_0 \in H$ such that $\displaystyle \limsup_{t \to \infty} \|T_2(t)u_0\|_H=\infty$;
	
	\item If $\lambda = \lambda_1$ and $g = 0$, then for all $R>0$ there exists $u_0 \in H$ such that $\|T(t)u_0\|_H \geq R$.
 	\end{enumerate}
\end{proposition}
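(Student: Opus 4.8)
The plan is to treat the three regimes separately, in each case reducing everything to the explicit Fourier representation \eqref{eq:3.1a}--\eqref{eq:fourmod} together with the estimates \eqref{eq:3.1} and \eqref{eq:3.3} already derived. For item (1), I would first produce the equilibrium: since $\lambda<\lambda_1\le\lambda_j$ for every $j$, the operator $A_2-\lambda\,I$ has spectrum bounded below by $\lambda_1-\lambda>0$ and is therefore boundedly invertible on $H$, so $u_2^*:=(A_2-\lambda I)^{-1}g$ is the unique solution of $A_\lambda u_2^*+g=0$, i.e. the unique equilibrium. The singleton $\{u_2^*\}$ is compact and, being a fixed point, satisfies $T_2(t)\{u_2^*\}=\{u_2^*\}$. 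Finally I would invoke \eqref{eq:3.1}: for any bounded $\mathcal B\subset H$ one has $\sup_{u_0\in\mathcal B}\|T_2(t)u_0-u_2^*\|_H\le e^{(\lambda-\lambda_1)t}\sup_{u_0\in\mathcal B}\|u_0-u_2^*\|_H\to 0$ as $t\to\infty$, since $\lambda-\lambda_1<0$. This is exactly the attraction of bounded sets, so $\mathcal A_2=\{u_2^*\}$.

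For item (2) the idea is to exhibit a single Fourier mode that is unbounded and use the lower bound $\|u_2(t)\|_H\ge|\hat u_{2_j}(t)|$ coming from \eqref{eq:3.3}. If $\lambda>\lambda_1$ I would take $j=1$ and $u_0=M\varphi_1$; by \eqref{eq:fourmod}, $\hat u_{2_1}(t)=e^{(\lambda-\lambda_1)t}\big(M+\tfrac{\langle g,\varphi_1\rangle}{\lambda-\lambda_1}\big)-\tfrac{\langle g,\varphi_1\rangle}{\lambda-\lambda_1}$, and choosing $M$ so that the bracket is nonzero (any sufficiently large $M$ works) forces $|\hat u_{2_1}(t)|\to\infty$, whence $\limsup_{t}\|T_2(t)u_0\|_H=\infty$; note that $g$ plays no role in this subcase. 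If instead $\lambda=\lambda_1$, the exponentials degenerate and growth can only be linear: by the $\lambda=\lambda_j$ branch of \eqref{eq:fourmod} one has $\hat u_{2_1}(t)=\langle u_0,\varphi_1\rangle+t\langle g,\varphi_1\rangle$, so any $u_0$ together with $\langle g,\varphi_1\rangle\neq0$ produces linear blow-up of this mode.

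I expect the delicate point to lie precisely in this critical regime $\lambda=\lambda_1$: the bare hypothesis $g\neq0$ becomes useful only once one knows that $g$ has nonzero projection onto the first eigenspace $\ker(A_\lambda)=\mathrm{span}\{\varphi_1\}$, because if $g\perp\varphi_1$ then every mode with $\lambda_j=\lambda_1$ is constant and every mode with $\lambda_j>\lambda_1$ decays, making all orbits bounded. Thus the main obstacle is to guarantee that the relevant resonant mode is genuinely excited; this is where I would be careful, selecting a mode $\varphi_j$ with $\lambda_j=\lambda_1$ and $\langle g,\varphi_j\rangle\neq0$, and it is exactly this resonance that distinguishes item (2) from item (3).

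For item (3), with $\lambda=\lambda_1$ and $g=0$, I would observe that every element of the line $\mathbb R\varphi_1$ is an equilibrium: indeed $A_\lambda(R\varphi_1)=(\lambda-\lambda_1)R\varphi_1=0$ and $g=0$, so $T_2(t)(R\varphi_1)=R\varphi_1$ for all $t$. Hence, given $R>0$, the choice $u_0=R\varphi_1$ yields $\|T_2(t)u_0\|_H=\|R\varphi_1\|_H=R$ for every $t\ge0$, which is the assertion; equivalently this follows from the $\lambda=\lambda_j$ branch of \eqref{eq:3.3}, as $\hat u_{2_1}(t)=\langle u_0,\varphi_1\rangle$ is then constant. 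This simultaneously rules out a compact global attractor, since arbitrarily large invariant points persist.
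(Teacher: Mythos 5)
Your proposal takes the same route as the paper: the paper gives no separate proof of Proposition \ref{prop:attrac2}, deriving it directly from the preceding discussion, namely the exponential estimate \eqref{eq:3.1} for item (1) and the mode formula \eqref{eq:fourmod} with the lower bound \eqref{eq:3.3} for items (2) and (3). Your treatment of items (1) and (3), and of item (2) in the case $\lambda>\lambda_1$, is exactly this argument made explicit; your construction of $u_2^*=(A_2-\lambda I)^{-1}g$, the uniform-over-bounded-sets attraction from \eqref{eq:3.1}, the observation that $g$ plays no role once $\lambda>\lambda_1$, and the equilibrium line $\mathbb{R}\varphi_1$ in item (3) are all correct.

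The place where you go beyond the paper is the critical case $\lambda=\lambda_1$, and the caution you flag there is warranted: it is a genuine defect of the \emph{statement}, not of your argument. Since $\Omega$ is connected, $\lambda_1$ is simple, so the only resonant mode is $\varphi_1$; if $g\neq0$ but $\langle g,\varphi_1\rangle=0$, then, exactly as you compute, $\hat{u}_{2_1}(t)=\langle u_0,\varphi_1\rangle$ is constant while for $j\geq2$ one has $|\hat{u}_{2_j}(t)|\le|\langle u_0,\varphi_j\rangle|+(\lambda_j-\lambda_1)^{-1}|\langle g,\varphi_j\rangle|$, whence $\|u_2(t)\|_H^2\le 2\|u_0\|_H^2+2(\lambda_2-\lambda_1)^{-2}\|g\|_H^2$ for all $t\geq 0$: every orbit is bounded and item (2) as literally stated fails. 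So your proof of item (2) at $\lambda=\lambda_1$ is necessarily conditional on the existence of a resonant mode with $\langle g,\varphi_j\rangle\neq0$ --- which, $\lambda_1$ being simple, means $\langle g,\varphi_1\rangle\neq0$ --- and no unconditional proof exists. The paper's own derivation has the same tacit assumption: the branch $\lambda=\lambda_j$ of \eqref{eq:3.3} produces growth only when $\langle g,\varphi_j\rangle\neq0$. With the hypothesis in item (2) amended to $\langle g,\varphi_1\rangle\neq0$ when $\lambda=\lambda_1$ (bare $\lambda>\lambda_1$ sufficing otherwise), your argument is complete and coincides with the paper's.
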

  
As a immediate consequence of Proposition \ref{prop:attrac2}, if $\lambda \geq \lambda_1$ there is no compact global attractor for the semigroup $\{T_2(t)\}_{t\geq 0}$. 
However, we consider a proper notion of non-compact global attractor suitable for slowly non-dissipative dynamical systems.

\begin{definition}\cite{PR:16}\label{def:unbattr}
	A non-compact global attractor for the semigroup $\{T_2(t)\}_{t\geq 0}$ is a non-empty minimal subset $\mathcal{A}_2$ of $H$ satisfying $T_2(t)\mathcal{A}_2 = \mathcal{A}_2$ for all $t\geq 0$ and $\displaystyle \lim_{t\to \infty} \, \sup_{b \in \mathcal{B}} \inf_{a\in \mathcal{A}_2} \|T_2(t)b - a\|_{H}=0$, for all bounded subset $\mathcal{B} \subset H$. 
\end{definition}
 
%

\begin{theorem}\label{teo:carac-att-unbou}
Let $\mathcal{A}_2 \subset H$ be a non-compact global attractor for the semigroup $\{T_2(t)\}_{t \geq 0}$. If $u_0 \in \mathcal{A}_2$, there exists a complete orbit for $\{T_2(t)\}_{t \geq 0}$ through $u_0$. 
Moreover, if there exists a complete orbit $\phi:\mathbb{R} \to H$ for $\{T_2(t)\}_{t \geq 0}$ through $u_0 \in H$ such that $\phi((-\infty,0]) \subset H$ is bounded, then $u_0 \in \mathcal{A}_2$.
\end{theorem}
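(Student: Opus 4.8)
The plan is to mimic the classical characterisation of the compact attractor (Theorem~\ref{teo:carac-att}), but keeping in mind that the only structural facts at our disposal here are the invariance $T_2(t)\mathcal{A}_2=\mathcal{A}_2$ and the attraction property from Definition~\ref{def:unbattr}; no compactness is available.

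\emph{Existence of a complete orbit through $u_0\in\mathcal{A}_2$.} First I would exploit the \emph{negative} invariance contained in $T_2(t)\mathcal{A}_2=\mathcal{A}_2$, namely $\mathcal{A}_2\subseteq T_2(t)\mathcal{A}_2$. Since $u_0\in\mathcal{A}_2=T_2(1)\mathcal{A}_2$, choose $u_{-1}\in\mathcal{A}_2$ with $T_2(1)u_{-1}=u_0$, and inductively choose $u_{-n-1}\in\mathcal{A}_2$ with $T_2(1)u_{-n-1}=u_{-n}$. I then define, on each ray $[-n,\infty)$, the map $\phi(t):=T_2(t+n)u_{-n}$. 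The cocycle identity makes this definition independent of $n$, since for $t\ge -n$ one has $T_2(t+n+1)u_{-n-1}=T_2(t+n)T_2(1)u_{-n-1}=T_2(t+n)u_{-n}$; hence $\phi:\mathbb{R}\to H$ is well defined and takes values in $\mathcal{A}_2$. Continuity of $\phi$ follows from the continuity of $t\mapsto T_2(t)v$, the normalisation $\phi(0)=T_2(1)u_{-1}=u_0$ holds by construction, and $T_2(t)\phi(s)=\phi(t+s)$ for $t\ge 0$, $s\in\mathbb{R}$, is immediate from the cocycle property. Thus $\phi$ is the desired complete orbit. Note this step uses neither minimality nor attraction.

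\emph{Converse.} Suppose now $\phi:\mathbb{R}\to H$ is a complete orbit through $u_0$ with $K:=\phi((-\infty,0])$ bounded. The key observation is the identity $u_0=\phi(0)=T_2(t)\phi(-t)$, valid for every $t\ge 0$, with $\phi(-t)\in K$. Applying the attraction property of Definition~\ref{def:unbattr} to the bounded set $K$ yields
\begin{equation*}
\dist(u_0,\mathcal{A}_2)=\inf_{a\in\mathcal{A}_2}\|T_2(t)\phi(-t)-a\|_{H}\le \sup_{b\in K}\inf_{a\in\mathcal{A}_2}\|T_2(t)b-a\|_{H}\stackrel{t \to \infty}{\longrightarrow} 0 .
\end{equation*}
Since the left-hand side does not depend on $t$, it must vanish, so $\dist(u_0,\mathcal{A}_2)=0$, i.e. $u_0\in\overline{\mathcal{A}_2}$.

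\emph{Main obstacle.} The delicate point is upgrading $u_0\in\overline{\mathcal{A}_2}$ to $u_0\in\mathcal{A}_2$, which requires $\mathcal{A}_2$ to be closed; the attraction estimate by itself only places $u_0$ in the closure. I would settle this by invoking that a (non-compact) global attractor is a closed set in the sense recalled in \cite{PR:16}, or, more concretely, by using the explicit linear structure of the present problem: the spectral splitting $H=H_u\oplus H_s$ of $A_\lambda$ together with the mode-by-mode formulas \eqref{eq:fourmod}--\eqref{eq:3.3} identifies $\mathcal{A}_2$ with a closed affine set, so closedness is automatic and the inclusion $u_0\in\mathcal{A}_2$ follows. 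This closedness issue, rather than either of the two constructions above, is where I expect the real work to lie.
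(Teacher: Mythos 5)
Your argument is essentially the paper's: the same pullback construction through points $u_{-n}$ with $T_2(1)u_{-n-1}=u_{-n}$ for the first part, and the same attraction estimate applied to the bounded set $\phi((-\infty,0])$ for the converse (the paper estimates the whole orbit $\phi(\mathbb{R})$ rather than just the single point $u_0$, an immaterial difference). The closedness issue you flag is genuine but is present, silently, in the paper's own proof as well: its chain of equalities only yields $\sup_{b\in\phi(\mathbb{R})}\inf_{a\in\mathcal{A}_2}\|b-a\|_H=0$, i.e.\ $\phi(\mathbb{R})\subset\overline{\mathcal{A}_2}$, and the final step $\phi(\mathbb{R})\subset\mathcal{A}_2$ implicitly uses that $\mathcal{A}_2$ is closed (as it is in the framework of \cite{PR:16}, or via the explicit description $\mathcal{A}_2=\mathcal{A}_2^u\cup\{u_2^*\}$) --- exactly the repair you propose, so your version is, if anything, the more careful one.
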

\begin{proof}
The first part of the Theorem is proved by standard arguments based on the invariance of $\mathcal{A}_2$. 
For the reader's convenience we present it.

Let $u_0 \in \mathcal{A}_2$	be fixed. Since $\mathcal{A}_2=T_2(1)\mathcal{A}_2$, there exists $u_{-1} \in \mathcal{A}_2$ such that $T_2(1)u_{-1}=u_0$. 
By induction there exists a sequence $\{u_{-n}\}_{n\in \mathbb{N}}$ satisfying $T(1)u_{-n-1}=u_{-n}$ for all $n\in \mathbb{N}$. 

Finally, defining $\phi:\mathbb{R} \to H$ by
$$
\phi(t)=
\begin{cases}
T_2(t)u_0, \quad t\geq 0, \\
	     T_2(n+t)u_{-n}, \quad t \in [-n,-n+1), \, n=1,2, \cdots,
\end{cases}
$$
one obtains a complete orbit for $\{T_2(t)\}_{t \geq 0}$ through $u_0$.

On the other hand, let assume that $\phi:\mathbb{R} \to H$ is a complete orbit for $\{T_2(t)\}_{t \geq 0}$ through $u_0$ such that $\phi((-\infty,0]) \subset H$ is bounded.
Then
$$
\displaystyle \sup_{b \in \phi(\mathbb{R})} \inf_{a\in \mathcal{A}_2} \|b - a\|_{H} =  \lim_{t\to \infty} \, \sup_{b \in \phi((-\infty,t])} \inf_{a\in \mathcal{A}_2} \|b - a\|_{H} = \lim_{t\to \infty} \, \sup_{b \in \phi((-\infty,0])} \inf_{a\in \mathcal{A}_2} \|T_2(t)b - a\|_{H}=0,
$$ 	
and we conclude that $\phi(\mathbb{R}) \subset \mathcal{A}_2$.

\end{proof}

In the following we describe in some detail the structure of a such non-compact global attractor. 

If $\lambda_1 < \lambda_2 \le \lambda_3 \leq  \cdots$ denote the eigenvalues of the operator $A_2$ counted with multiplicity, let $N(\lambda)$ be the number of eigenvalues, counted with multiplicity, less or equal to $\lambda$, i.e., $N(\lambda)$ satisfies that $0< \lambda_1 < \lambda_2 \le \cdots \le \lambda_{N(\lambda)} \leq \lambda < \lambda_{N(\lambda)+1}$.

By \eqref{eq:fourmod} if one considers unbounded orbits, since its norm grows to infinity with time, the Fourier modes $\hat{u}_{2_j}(t)$ with $j > N(\lambda)$ will not affect the shape profile of such orbits as $t \to \infty$. 
Since we are concerned with its behavior at infinity, we are particularly concerned with the modes $\hat{u}_{2_j}(t)$ with $1\le j \leq  N(\lambda)$.

\begin{proposition}
	Consider an unbounded orbit $u_2(t)$ and its normalised trajectory $\dfrac{u_2(t)}{\|u_2(t)\|_H}$. Then $\dfrac{u_2(t)}{\|u_2(t)\|_H} \stackrel{t \to \infty}{\longrightarrow} \varphi_j$ in $H$, if and only if, $\dfrac{\hat{u}_{2_j}(t)}{\|u_2(t)\|_H} \stackrel{t \to \infty}{\longrightarrow} 1$.
\end{proposition}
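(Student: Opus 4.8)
The plan is to reduce the whole statement to an elementary identity on the unit sphere of $H$. Write $v(t):=\dfrac{u_2(t)}{\|u_2(t)\|_H}$, which is well defined for all large $t$: along an unbounded orbit the dominant Fourier modes grow, so $\|u_2(t)\|_H\to\infty$ and in particular $\|u_2(t)\|_H>0$ eventually. By construction $\|v(t)\|_H=1$, and pairing the expansion \eqref{eq:3.1a} with $\varphi_j$ shows that the $j$-th Fourier coefficient of $v(t)$ is exactly
$$
\langle v(t),\varphi_j\rangle = \frac{\hat{u}_{2_j}(t)}{\|u_2(t)\|_H}.
$$
Hence the two convergences in the statement are, respectively, the strong convergence $v(t)\to\varphi_j$ in $H$ and the convergence of the single coordinate $\langle v(t),\varphi_j\rangle\to 1$, and all that remains is to show these are equivalent for unit vectors.

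For the forward implication I would simply invoke continuity of the inner product: if $v(t)\to\varphi_j$ in $H$, then $\langle v(t),\varphi_j\rangle\to\langle\varphi_j,\varphi_j\rangle=\|\varphi_j\|_H^2=1$, which is the claimed limit. The converse is where the normalisation carries the weight. Since both $v(t)$ and $\varphi_j$ are unit vectors, I would expand
$$
\|v(t)-\varphi_j\|_H^2 = \|v(t)\|_H^2 - 2\langle v(t),\varphi_j\rangle + \|\varphi_j\|_H^2 = 2 - 2\,\langle v(t),\varphi_j\rangle,
$$
so that $\langle v(t),\varphi_j\rangle\to 1$ immediately forces $\|v(t)-\varphi_j\|_H^2\to 0$, i.e. $v(t)\to\varphi_j$ in $H$.

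There is no real obstacle here; the one point worth flagging is that the converse genuinely relies on the constraint $\|v(t)\|_H\equiv 1$. Convergence of a single coordinate to $1$ would not, by itself, yield strong (or even weak) convergence; it is precisely the combination $\|v(t)\|_H=\|\varphi_j\|_H=1$ that, through Parseval's identity, squeezes the remaining mass $\sum_{k\neq j}\langle v(t),\varphi_k\rangle^2 = 1-\langle v(t),\varphi_j\rangle^2\to 0$ to zero. I would also record at the outset that the normalised trajectory is meaningful only where $\|u_2(t)\|_H\neq 0$, which is guaranteed for large $t$ by the unboundedness of the orbit.
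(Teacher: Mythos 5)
Your proof is correct and follows essentially the same route as the paper: the entire equivalence rests on the single identity $\left\|\frac{u_2(t)}{\|u_2(t)\|_H} - \varphi_j\right\|_H^2 = 2 - 2\,\frac{\hat{u}_{2_j}(t)}{\|u_2(t)\|_H}$, valid because both vectors are unit vectors, which is exactly the paper's one-line argument. Your additional remarks (well-definedness of the normalisation for large $t$, and why the unit-norm constraint is what makes the converse work) are sound elaborations of the same computation rather than a different approach.
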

\begin{proof}
Just note that
\begin{align*}
\left\|\dfrac{u_2(t)}{\|u_2(t)\|_H} - \varphi_j \right\|_H^2= 2 - 2 \langle \dfrac{u_2(t)}{\|u_2(t)\|_H} , \varphi_j \rangle	= 2-2 \dfrac{\hat{u}_{2_j}(t)}{\|u_2(t)\|_H}.
\end{align*}	
\end{proof}

\begin{corollary}
Consider an unbounded orbit $u_2(t)$ and its normalised trajectory $\dfrac{u_2(t)}{\|u_2(t)\|_H}$. 
Then $\hat{u}_{2_j}(t) \stackrel{t \to \infty}{\longrightarrow} \infty$ for some $1\le j \le  N(\lambda)$. 
Moreover $\dfrac{u_2(t)}{\|u_2(t)\|_H} \stackrel{t \to \infty}{\longrightarrow} \varphi_k$ in $H$, where $\displaystyle k=\min_{1\le j \le  N(\lambda)}\{j:\hat{u}_{2_j}(t) \stackrel{t \to \infty}{\longrightarrow} \infty\}$.
\end{corollary}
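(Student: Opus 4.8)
The plan is to combine Parseval's identity with the explicit form of the Fourier modes in \eqref{eq:fourmod}, splitting the spectrum at the threshold $N(\lambda)$. Evaluating the integral in \eqref{eq:fourmod} gives, for $\lambda \neq \lambda_j$,
$$
\hat{u}_{2_j}(t) = C_j\, e^{(\lambda-\lambda_j)t} - \frac{\langle g,\varphi_j\rangle}{\lambda-\lambda_j}, \qquad C_j := \langle u_0,\varphi_j\rangle + \frac{\langle g,\varphi_j\rangle}{\lambda-\lambda_j},
$$
and $\hat{u}_{2_j}(t) = \langle u_0,\varphi_j\rangle + t\,\langle g,\varphi_j\rangle$ when $\lambda = \lambda_j$. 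The sign of $\lambda-\lambda_j$, governed by whether $j \le N(\lambda)$ or $j > N(\lambda)$, dictates whether the mode relaxes to a finite limit or grows, and this dichotomy drives the whole argument.

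First I would establish existence of a blowing-up low mode. For $j > N(\lambda)$ one has $\lambda-\lambda_j \le \lambda-\lambda_{N(\lambda)+1} < 0$, so $|e^{(\lambda-\lambda_j)t}|\le 1$ and $|\hat{u}_{2_j}(t)|\le |\langle u_0,\varphi_j\rangle| + |\langle g,\varphi_j\rangle|/(\lambda_{N(\lambda)+1}-\lambda)$ for every $t\ge 0$; summing the squares and using Bessel's inequality for $u_0$ and $g$ yields a bound on $\sum_{j>N(\lambda)}|\hat{u}_{2_j}(t)|^2$ that is uniform in $t$. Since by Parseval $\|u_2(t)\|_H^2 = \sum_{j\ge 1}|\hat{u}_{2_j}(t)|^2$ and the orbit is unbounded, the finite sum $\sum_{j\le N(\lambda)}|\hat{u}_{2_j}(t)|^2$ must be unbounded, so some $\hat{u}_{2_j}(t)$ with $1\le j\le N(\lambda)$ is unbounded. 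Because each such mode is eventually monotone (purely exponential or affine in $t$), unboundedness upgrades to $\hat{u}_{2_j}(t)\to\pm\infty$, which moreover forces the full limit $\|u_2(t)\|_H\to\infty$.

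For the convergence of the normalised trajectory I would invoke the preceding Proposition: it suffices to prove $\hat{u}_{2_k}(t)/\|u_2(t)\|_H \to 1$. The index $k$ realises the smallest eigenvalue $\lambda_k$ among the blowing-up modes (by the ordering of the spectrum), so mode $k$ grows at the fastest rate. Writing $\|u_2(t)\|_H^2 = \hat{u}_{2_k}(t)^2\big(1 + \sum_{j\ne k}\hat{u}_{2_j}(t)^2/\hat{u}_{2_k}(t)^2\big)$, I would bound each ratio: for $j\le N(\lambda)$ with $\lambda_j > \lambda_k$ the leading asymptotics give $\hat{u}_{2_j}(t)^2/\hat{u}_{2_k}(t)^2 \sim (C_j/C_k)^2\, e^{2(\lambda_k-\lambda_j)t}\to 0$, while the uniformly bounded tail $\sum_{j>N(\lambda)}|\hat{u}_{2_j}(t)|^2$ is crushed by $\hat{u}_{2_k}(t)^2\to\infty$. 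Hence the bracket tends to $1$, $\|u_2(t)\|_H \sim \hat{u}_{2_k}(t)$, and the Proposition delivers $u_2(t)/\|u_2(t)\|_H \to \varphi_k$.

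The main obstacle is a possible tie at the dominant eigenvalue: if $\lambda_k$ has multiplicity greater than one and more than one of the associated modes blows up, then for a competing index $j$ with $\lambda_j=\lambda_k$ one gets $\hat{u}_{2_j}(t)^2/\hat{u}_{2_k}(t)^2 \to (C_j/C_k)^2\ne 0$, the bracket no longer tends to $1$, and the normalised trajectory converges to a nontrivial combination rather than to $\varphi_k$ alone. When $\lambda_k$ is simple—in particular when $k=1$, since $\lambda_1<\lambda_2$—this degeneracy cannot occur and the argument closes cleanly; in the general case one must assume, or read into the statement, that the fastest blowing mode is simple. A secondary caveat is the sign convention in ``$\hat{u}_{2_j}(t)\to\infty$'': with $\hat{u}_{2_k}(t)\to+\infty$ the limit is $+\varphi_k$, while $\hat{u}_{2_k}(t)\to-\infty$ would give $-\varphi_k$, so the conclusion should be understood up to this sign.
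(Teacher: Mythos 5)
Your proof is correct and follows essentially the same route as the paper: evaluate the Fourier modes explicitly, locate a blowing-up mode among $1\le j\le N(\lambda)$, compare growth rates mode by mode, and invoke the preceding Proposition via $\hat{u}_{2_k}(t)/\|u_2(t)\|_H\to 1$. It is worth noting, though, that your version is tighter than the paper's own two-line argument at exactly the points you flag. The paper deduces the first claim from \eqref{eq:3.3}, which really gives the converse implication (a divergent low mode forces an unbounded orbit); your uniform Bessel-type bound on the tail $\sum_{j>N(\lambda)}|\hat{u}_{2_j}(t)|^2$, combined with eventual monotonicity of the finitely many low modes, argues in the correct direction. Similarly, the paper's display $\lim_{t\to\infty}\hat{u}_{2_j}(t)/\|u_2(t)\|_H=\lim_{t\to\infty}\hat{u}_{2_j}(t)/\hat{u}_{2_k}(t)$ tacitly assumes $\|u_2(t)\|_H\sim\hat{u}_{2_k}(t)$, which is precisely what your bracket computation $\|u_2(t)\|_H^2=\hat{u}_{2_k}(t)^2\bigl(1+\sum_{j\ne k}\hat{u}_{2_j}(t)^2/\hat{u}_{2_k}(t)^2\bigr)$ establishes. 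Your two caveats are genuine gaps in the statement and in the paper's proof, not artifacts of your approach: since the eigenvalues are ordered $\lambda_2\le\lambda_3\le\cdots$ counted with multiplicity, the strict inequality $e^{(\lambda-\lambda_j)t}<e^{(\lambda-\lambda_k)t}$ used in the paper for $j>k$ fails when $\lambda_j=\lambda_k$, and in that tie case the normalised trajectory converges to a normalised combination $\bigl(C_k\varphi_k+C_j\varphi_j\bigr)/\bigl(C_k^2+C_j^2\bigr)^{1/2}$ rather than to $\varphi_k$, so the conclusion as stated needs simplicity of the dominant blowing eigenvalue (automatic for $k=1$). The sign issue is likewise real: if all divergent modes tend to $-\infty$ the defining set $\{j:\hat{u}_{2_j}(t)\to\infty\}$ is empty unless divergence is read as $|\hat{u}_{2_j}(t)|\to\infty$, and then the limit is $\pm\varphi_k$ according to the sign of $\hat{u}_{2_k}(t)$.
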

\begin{proof}
Its clear from \eqref{eq:3.3} that if $u_2(t)$ is an unbounded orbit then $\hat{u}_{2_j}(t) \stackrel{t \to \infty}{\longrightarrow} \infty$ for some $1\le j \leq N(\lambda)$.

Let $j \in (k, N(\lambda)]$ be fixed. 
Assuming that $\hat{u}_{2_j}(t) \stackrel{t \to \infty}{\longrightarrow} \infty$ its grows like $e^{(\lambda-\lambda_j)t}$. 
Since $e^{(\lambda-\lambda_j)t}< e^{(\lambda-\lambda_k)t}$ for all $t$ large enough, we have that
$$
\lim_{t\to \infty} \dfrac{\hat{u}_{2_j}(t)}{\|u_2(t)\|_H} = \lim_{t\to \infty} \dfrac{\hat{u}_{2_j}(t)}{\hat{u}_{2_k}(t)} = \lim_{t\to \infty} e^{(\lambda_k - \lambda_j)t}=0.
$$	
\end{proof}

\begin{remark}
In this simple case ($p=2$) where we have explicit solutions for the problem \eqref{eq:AEp}, we can explicitly describe the non-compact global attractor $\mathcal{A}_2$.
 
Considering the eigenfunctions $\varphi_j$, $j=1, 2, \cdots, N(\lambda)$ as initial data, it is not difficult to see that all the asymptotic behavior of $u_2(t)$ in the infinite is captured by the subspace generated by $\varphi_j$, with $1 \leq j\leq N(\lambda)$, which is minimal with such property, while bounded solutions must converge to $u^*_2$ exponentially fast. 
Therefore the non-compact global attractor $\mathcal{A}_2$ can be decomposed as
$$
\mathcal{A}_2 = \mathcal{A}_2^u \cup \{u^*_2\}, 
$$
where $\mathcal{A}_2^u:={\rm span} \{\varphi_1, \varphi_2, \cdots, \varphi_{N(\lambda)}\}$ is the unbounded part of the global attractor. 

In the following we describe the approach developed in \cite{He:13}, based on Poincar\'e's projection, which allow us to interpret $\varphi_j$, $1 \leq j\leq N(\lambda)$ as equilibrium points of \eqref{eq:E2} at infinity, and therefore, unbounded orbits may be interpreted as heteroclinic connections to infinity.

First we embed $H$ into $H \times \mathbb{R}$ and we identify $H$ with the affine hyperplane $H \times \{1\}$ through the bijection $H \ni v \mapsto (v,1) \in H \times \{1\}$.  
Finally, the affine hyperplane $H \times \{1\}$ is projected stereographically to the Poincar\'e hemisphere $\mathcal{S}:= \{(v,s) \in H \times \mathbb{R}: \|v\|_H^2 + s^2=1, \, s \geq 0\}$. This transformation is given explicitly by the formula
$$
\mathcal{P}:v \in H \mapsto \dfrac{(v,1)}{\|v\|_H^2 + 1} \in \mathcal{S}
$$ 

\begin{center}
\begin{tikzpicture}\usetikzlibrary{arrows,shapes,positioning}\usetikzlibrary{decorations.markings} \tikzstyle arrowstyle=[scale=1]
\tikzstyle directed=[postaction={decorate,decoration={markings, mark=at position .65 with {\arrow[arrowstyle]{stealth}}}}]
\tikzstyle reverse directed=[postaction={decorate, decoration={markings, mark=at position .65 with {\arrowreversed[arrowstyle]{stealth};}}}]
\coordinate (O) at (-0.5,0) ; \coordinate (A) at (-0.5,4) ; \coordinate (B) at (-0.5,-2) ; \coordinate (C) at (-5,0) ; \coordinate (D) at (5,0) ;
\coordinate (E) at (-5,2) ; \coordinate (F) at (3,2) ;
\coordinate (G) at (-3,3) ; \coordinate (H) at (5,3) ;
\coordinate (P) at (3,2.5) ; 
\draw(A) -- (B) ; \node[] at (94:3.8) {$\mathbb{R}$};
\draw (C) -- (D) ; \node[] at (2:4.8) {\footnotesize{$H$}};
\draw[thick, directed] (P) -- (O); \node[] at (40:4.1) {$v$}; \node[] at (35:2.3) {$\mathcal{P}v$}; \node[] at (147:4.2) {\footnotesize{$H\times \{1\}$}};
\draw (E) -- (F) ;
\draw[dash pattern=on5pt off3pt] (E) -- (G) ;
\draw[dash pattern=on5pt off3pt] (G) -- (H) ;
\draw[dash pattern=on5pt off3pt] (H) -- (F) ;
\draw[dash pattern=on5pt off3pt] (2,0) arc (0:180:2.4);
\draw (2,0) arc (0:53.8:2.4);
\draw (-2.8,0) arc (180:125:2.4);
\node[] at (-96:2.4) {\text{The Poincar\'e's projection}};
		
\end{tikzpicture}
\end{center}                                                                                   

As $\|v\|_H$ goes to infinity, $\mathcal{P}v$ goes to the equator $\mathcal{E}:=\{(v,0)\in H \times \mathbb{R}: \|v\|_H^2=1\}$ of the Poincar\'e hemisphere. 
By applying the Poincar\'e transformation at equation \eqref{eq:AEp} (in the case $p=2$) was observed in \cite{He:13} that $\mathcal{P}^{-1}(\varphi_j,0)$, $1 \leq j\leq N(\lambda)$ play the role of equilibria for \eqref{eq:E2} at infinity. 
In addition to that, this Poincar\'e ``compactification'' of the phase space allows a rigorous analysis of the dynamics at infinity in more complicated situations, e.g. nonlinear equations.
\end{remark}

\section{Continuity properties of flows and attractors}\label{sec:cont}

We start with the continuity of the (bounded) equilibrium points.

\begin{theorem}\label{theo:4.1}
Let $u_p^*$ and $u_2^*$	equilibrium points of \eqref{eq:Ep} and \eqref{eq:E2} respectively. Then $\displaystyle \|u^*_p - u^*_2\|_V \stackrel{p\to 2^+}{\longrightarrow} 0$.
\end{theorem}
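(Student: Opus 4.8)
The plan is to compare the two elliptic problems by subtracting their weak formulations and testing with the difference $w := u_p^* - u_2^*$, exploiting the monotonicity of the $p$-Laplacian through Tartar's inequality. First I would record that $u_p^*$ and $u_2^*$ satisfy, for every admissible test function $v$,
\[
\int_\Omega |\nabla u_p^*|^{p-2}\nabla u_p^* \cdot \nabla v \, dx = \int_\Omega (\lambda u_p^* + g)\,v\,dx, \qquad \int_\Omega \nabla u_2^* \cdot \nabla v \, dx = \int_\Omega (\lambda u_2^* + g)\,v\,dx.
\]
Before subtracting I would observe that $u_2^*$ is smooth: since $g \in L^\infty(\Omega)$ and $\Omega$ is smooth, elliptic regularity and a bootstrap argument give $u_2^* \in C^1(\overline\Omega)$, hence $M := \|\nabla u_2^*\|_{L^\infty} < \infty$ and $u_2^* \in W_0^{1,p}(\Omega)$ for every $p$ near $2$. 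Thus $w \in W_0^{1,p}(\Omega) \subset W_0^{1,2}(\Omega)$ is an admissible test function in both identities.

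Subtracting and choosing $v = w$, the terms involving $g$ cancel and the right-hand sides combine to $\lambda\|w\|_H^2$, while I would split the left-hand side as
\[
\int_\Omega\!\big(|\nabla u_p^*|^{p-2}\nabla u_p^* - |\nabla u_2^*|^{p-2}\nabla u_2^*\big)\cdot\nabla w\,dx + \int_\Omega\!\big(|\nabla u_2^*|^{p-2}\nabla u_2^* - \nabla u_2^*\big)\cdot\nabla w\,dx.
\]
To the first integral I apply Tartar's inequality \eqref{lem:tartar}, which for $p \ge 2$ bounds it below by $2^{2-p}\|w\|_V^p$. The second integral is a ``frozen-coefficient'' defect measuring how far $A_2$ is from $A_p$ at $u_2^*$; by H\"older it is controlled by $\epsilon_p\|w\|_V$ with $\epsilon_p := \big\||\nabla u_2^*|^{p-2}\nabla u_2^* - \nabla u_2^*\big\|_{L^q}$, $q = p/(p-1)$. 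Since $\sup_{0\le s \le M}|s^{p-1}-s|\to 0$ as $p \to 2^+$, one gets $\epsilon_p \le |\Omega|^{1/q}\sup_{0\le s\le M}|s^{p-1}-s| \to 0$. Combining, this yields
\[
2^{2-p}\|w\|_V^p \le \lambda\|w\|_H^2 + \epsilon_p\|w\|_V.
\]

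Finally I would absorb the $H$-norm using the Poincar\'e--Sobolev embedding $\|w\|_H \le C_p\|w\|_V$ for $W_0^{1,p}(\Omega)$, where $C_p \to \lambda_1^{-1/2}$ as $p \to 2^+$, obtaining $2^{2-p}\|w\|_V^p \le \lambda C_p^2\|w\|_V^2 + \epsilon_p\|w\|_V$. In the regime $\lambda < \lambda_1$ (the one in which the bounded equilibrium $u_2^*$ exists, cf.\ Proposition \ref{prop:attrac2}), the coefficients satisfy $2^{2-p}\to 1$ and $\lambda C_p^2 \to \lambda/\lambda_1 < 1$. A short case analysis then forces $\|w\|_V \to 0$: if $\|w\|_V \ge 1$ infinitely often then $\|w\|_V^p \ge \|w\|_V^2$, so $(2^{2-p}-\lambda C_p^2)\|w\|_V \le \epsilon_p \to 0$, contradicting $\|w\|_V \ge 1$; hence eventually $\|w\|_V < 1$, and dividing by $\|w\|_V$ gives $2^{2-p}\|w\|_V^{p-1} \le \lambda C_p^2\|w\|_V + \epsilon_p$, from which any subsequential lower bound $\|w\|_V \ge \eta>0$ would force, via uniform convergence of $t\mapsto 2^{2-p}t^{p-1}-\lambda C_p^2 t$ to $(1-\lambda/\lambda_1)t$ on $[\eta,1]$, the impossible inequality $\epsilon_p \ge \tfrac12(1-\lambda/\lambda_1)\eta>0$ for $p$ close to $2$. (When $\lambda \le 0$ the quadratic term is nonpositive and the estimate collapses at once to $\|w\|_V^{p-1} \le 2^{p-2}\epsilon_p \to 0$.)

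The main obstacle, and the point requiring care, is that the ambient space $V = W_0^{1,p}(\Omega)$ together with every constant ($2^{2-p}$, $C_p$, $\epsilon_p$, the Tartar constant) moves with $p$, so one cannot simply pass to a fixed-space limit. The delicate interplay is between the coercive term $\|w\|_V^p$ and the subordinate quadratic term $\lambda C_p^2\|w\|_V^2$, whose exponents coincide only in the limit; controlling this uniformly is precisely what makes the final case analysis necessary and is where the hypothesis $\lambda < \lambda_1$ is genuinely used. Establishing $\epsilon_p \to 0$ rests on the $L^\infty$-gradient bound for $u_2^*$, which is why the elliptic-regularity step is placed up front.
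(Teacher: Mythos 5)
Your proof is correct in the regime it treats, but it is a genuinely different argument from the paper's. The paper does not estimate anything directly: it invokes Lindqvist's variational stability theorem \cite{Li:87} for minimizers of $\mathcal{E}_p(u)=\frac1p\int_\Omega|\nabla u|^p\,dx-\int_\Omega g\,u\,dx$ and remarks that the same scheme runs for the modified functional $\tilde{\mathcal{E}}_p$ carrying the extra term $-\frac{\lambda}{2}\int_\Omega u^2\,dx$. You instead subtract the two weak formulations, test with $w=u_p^*-u_2^*$, apply Tartar's inequality (Lemma \ref{lem:tartar}) to the monotone part, and control the frozen-coefficient defect $\epsilon_p=\bigl\||\nabla u_2^*|^{p-2}\nabla u_2^*-\nabla u_2^*\bigr\|_{L^q}$ through the $L^\infty$ gradient bound for $u_2^*$ supplied by elliptic regularity. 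This is the same splitting the paper itself uses for the semigroup comparison in Theorem \ref{theo:4.2}, here transplanted to the stationary problem. What your route buys: it is self-contained (no appeal to \cite{Li:87}), quantitative ($\epsilon_p$ is explicitly $O\bigl(\sup_{0\le s\le M}|s^{p-1}-s|\bigr)\to0$), and it applies to \emph{every} equilibrium of \eqref{eq:Ep}, not only to energy minimizers as in the variational scheme. The individual steps check out: the bootstrap giving $u_2^*\in C^1(\overline\Omega)$, the H\"older estimate, $\epsilon_p\to0$, and the final dichotomy; note that for the Poincar\'e constant you only need $\limsup_{p\to2^+}\lambda C_p^2\le\lambda/\lambda_1$, which follows from $C_p\le\lambda_1^{-1/2}|\Omega|^{1/2-1/p}$, so your unproven assertion that $C_p$ converges exactly to $\lambda_1^{-1/2}$ is harmless.

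One caveat on scope. Your argument covers $\lambda<\lambda_1$ (and trivially $\lambda\le0$), whereas Theorem \ref{theo:4.1} is stated with no restriction on $\lambda$, and your justification --- that $\lambda<\lambda_1$ is ``the regime in which the bounded equilibrium $u_2^*$ exists'' --- is inaccurate: $u_2^*$ exists whenever $\lambda\notin\sigma(-\Delta)$, and the paper uses it for $\lambda\ge\lambda_1$ in the decomposition $\mathcal{A}_2=\mathcal{A}_2^u\cup\{u_2^*\}$. That said, the restriction is essentially forced on any ``all equilibria'' version of the statement: for $g=0$ and $\lambda>\lambda_1$, problem \eqref{eq:Ep} has, besides $0$, nontrivial equilibria obtained by scaling a constrained minimizer of $\|\nabla u\|_{L^p}^p$ on $\{\|u\|_H=1\}$ (if $-\Delta_p u=\mu_p u$ then $-\Delta_p(tu)=t^{p-2}\mu_p\,(tu)$, so $t=(\lambda/\mu_p)^{1/(p-2)}$), and since $\mu_p\to\lambda_1<\lambda$ these diverge in norm as $p\to2^+$ and cannot approach $u_2^*=0$. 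The paper's own sketch via $\tilde{\mathcal{E}}_p$ concerns minimizers and its $p=2$ limit functional is coercive only for $\lambda<\lambda_1$, so it carries the same implicit restriction; moreover the only place Theorem \ref{theo:4.1} is used (item 1 of the attractor-continuity theorem in Section \ref{sec:cont-attr}) assumes $\lambda<\lambda_1$. So your proof fully serves the paper's purposes --- just state the hypothesis $\lambda<\lambda_1$ openly rather than attributing it to existence of $u_2^*$.
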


This Theorem is proved in \cite[Theorem 1.3 and Remark 4.2]{Li:87} for the equation $-{\rm div}(|\nabla u|^{p-2} \nabla u)=g$ as $p\to 2$. 
The proof is based on variational arguments on the energy functional $\displaystyle \mathcal{E}_p(u)= \dfrac1p \int_{\Omega} |\nabla u|^p \, dx - \int_{\Omega} g\, u \,dx$. On letting $\displaystyle \tilde{\mathcal{E}_p}(u)= \dfrac1p \int_{\Omega} |\nabla u|^p \, dx - \dfrac{\lambda}{2} \int_{\Omega} u^2 \, dx - \int_{\Omega} g\, u \,dx$ we get the  result. 

\

In the following we address to the continuity of semigroups.

\begin{theorem}\label{theo:4.2}
Given $u_{p}^0,\, u_{2}^0 \in H$, let $T_p(t)u_{p}^0$ and $T_2(t)u_{2}^0$ be the semigroups defined in Section \ref{sec:pnot2} and Section \ref{sec:p2} respectively. 
There exist functions $C_1(p,R,T)$ with $C_1(p,R,T) \stackrel{p \to 2}{\longrightarrow} 0$, and $C_2(T)$ such that
\begin{equation}\label{eq:4.1}
 \|T_p(t)u_p^0 - T_2(t)u_2^0\|_H \le C_1(p,R,T) + C_2(T) \|u_{p}^0 - u_{2}^0\|_H,
\end{equation}
for $t \in [0,T]$ and $\|u_p^0\|_H,\, \|u_2^0\|_H \le R$.
\end{theorem}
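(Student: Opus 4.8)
The plan is to run a direct energy comparison for the difference of the two orbits. Write $u_p(t)=T_p(t)u_p^0$ and $u_2(t)=T_2(t)u_2^0$, set $w:=u_p-u_2$, and subtract the two realizations of \eqref{eq:AEp} (recall $B(u_p)-B(u_2)=\lambda w$) to get $\dfrac{dw}{dt}=-A_pu_p+A_2u_2+\lambda w$. The crucial algebraic step is to split the operator difference so as to keep the \emph{linear} dissipation and isolate a \emph{consistency error} measuring how far $A_p$ is from $A_2$ along the orbit $u_p$: writing $-A_pu_p+A_2u_2=-(A_p-A_2)u_p-A_2w$ and pairing in $H$ with $w$, I obtain, using $\langle A_2w,w\rangle=\|\nabla w\|_H^2$,
\[
\tfrac12\tfrac{d}{dt}\|w\|_H^2+\|\nabla w\|_H^2=-\langle (A_p-A_2)u_p,\,w\rangle+\lambda\|w\|_H^2 .
\]

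First I would bound the consistency term. After integration by parts it equals $\int_\Omega(|\nabla u_p|^{p-2}-1)\nabla u_p\cdot\nabla w\,dx$, so Cauchy--Schwarz and Young give $|\langle (A_p-A_2)u_p,w\rangle|\le\tfrac12F_p(t)+\tfrac12\|\nabla w\|_H^2$ with $F_p(t):=\int_\Omega\big(|\nabla u_p|^{p-1}-|\nabla u_p|\big)^2dx$. The half of $\|\nabla w\|_H^2$ is absorbed by the dissipation, leaving $\frac{d}{dt}\|w\|_H^2\le F_p(t)+2\lambda\|w\|_H^2$. Gronwall's inequality then yields, for $t\in[0,T]$, $\|w(t)\|_H\le e^{\lambda t}\|w(0)\|_H+e^{|\lambda|T}\big(\int_0^tF_p\big)^{1/2}$, from which I read off $C_2(T)=e^{|\lambda|T}$ and $C_1(p,R,T)=e^{|\lambda|T}\,\sup_{\|u_p^0\|_H\le R}\big(\int_0^TF_p\big)^{1/2}$, reducing the whole statement to showing that this supremum tends to $0$ as $p\to2^+$.

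For uniform control I would next use the basic energy estimate: pairing \eqref{eq:AEp} with $u_p$ and using $\langle A_pu_p,u_p\rangle=\|\nabla u_p\|_{L^p}^p$ gives, after Gronwall, a bound $\sup_{t\le T}\|u_p(t)\|_H\le M_0(R,T)$ uniform in $p$ near $2$, and integrating in time, $\iint_{Q_T}|\nabla u_p|^p\le M_1(R,T)$ on $Q_T:=(0,T)\times\Omega$; Hölder on the bounded set $Q_T$ then gives $\iint_{Q_T}|\nabla u_p|^2\le M_2$ as well, again uniformly as $p\to2$. Writing $r=|\nabla u_p|$, the integrand $(r^{p-1}-r)^2$ of $F_p$ tends to $0$ pointwise as $p\to2$, and on moderate gradients it is quantitatively small: the elementary bounds $(r^{p-1}-r)^2\le(p-2)^2e^{-2}$ for $r\le1$ and $(r^{p-1}-r)^2\le K^2(K^{p-2}-1)^2$ for $1<r\le K$ show that the contribution of $\{r\le K\}$ is $O_K(p-2)\to0$.

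The hard part will be the high-gradient tail $\{r>K\}$, where only $(r^{p-1}-r)^2\le r^{2p-2}$ is available, and the uniform bound $\iint_{Q_T}r^p\le M_1$ does \emph{not} by itself control $\iint_{\{r>K\}}r^{2p-2}$ since $2p-2>p$. To close this I would establish a uniform-in-$p$ higher a priori estimate for $\nabla u_p$: pairing the equation with $u_{p,t}$ and using $A_p=\partial J_p$ (so $\langle A_pu_p,u_{p,t}\rangle=\frac{d}{dt}\frac1p\|\nabla u_p\|_{L^p}^p$) to bound $\sup_{t\in[\tau,T]}\|\nabla u_p(t)\|_{L^p}$ for $\tau>0$, together with the uniform dissipativity of Theorem \ref{teo:CCD} and a parabolic gradient-smoothing bound uniform as $p\to2^+$; this makes the family $\{r^p\}$ equi-integrable at the higher level and forces $\iint_{\{r>K\}}(r^{p-1}-r)^2\to0$ after choosing $K$ large and then $p$ close to $2$. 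The genuinely delicate point is the initial layer $t\approx0$ for merely $H$-valued data, which I would handle by splitting $[0,T]=[0,\tau]\cup[\tau,T]$, controlling the short interval by the space-time energy bound alone and letting $\tau\to0$.
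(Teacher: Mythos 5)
Your reduction is clean and correct as far as it goes: the splitting $-A_pu_p+A_2u_2=-(A_p-A_2)u_p-A_2w$, the absorption of $\tfrac12\|\nabla w\|_H^2$ into the linear dissipation, and Gronwall do reduce \eqref{eq:4.1} to showing $\sup_{\|u_p^0\|_H\le R}\int_0^T F_p\,dt\to0$ as $p\to2^+$, with $F_p(t)=\int_\Omega\bigl(|\nabla u_p|^{p-1}-|\nabla u_p|\bigr)^2dx$. But that last step --- which you yourself flag as the hard part --- is a genuine gap, and the tools you sketch cannot close it. Pairing the equation with $u_{p,t}$ (using $A_p=\partial J_p$) together with the usual $t$-weighting trick yields at best $\sup_{t\in[\tau,T]}\|\nabla u_p(t)\|_{L^p}^p\le C/\tau$ uniformly in $p$, i.e. control at integrability level $p$; the tail of $F_p$ requires $\iint_{\{|\nabla u_p|>K\}}|\nabla u_p|^{2p-2}$ with $2p-2>p$, so no $L^\infty_t L^p_x$ gradient bound, however uniform, suffices --- you would need genuine higher integrability of the gradient (a parabolic Meyers-type estimate $\iint|\nabla u_p|^{p+\delta}\le M$ with $\delta$ and $M$ uniform as $p\to2^+$), which is nowhere established in your proposal. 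Likewise, Theorem \ref{teo:CCD} gives the uniform $V$-bound only after an absorption time $T(\mathcal{B})$, not on all of $[0,T]$; and your initial-layer argument does not work as stated: on $[0,\tau]$ the ``space-time energy bound alone'' ($\iint_{Q_T}|\nabla u_p|^p\le M_1$) controls only the set $\{|\nabla u_p|\le K\}$, while the contribution of $\{|\nabla u_p|>K\}$ to $\int_0^\tau F_p$ is exactly the quantity you cannot bound, so letting $\tau\to0$ does not rescue it.

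The paper avoids this difficulty entirely by making the opposite choice in the decomposition: it writes $A_pu_p-A_2u_2=(A_pu_p-A_pu_2)+(A_pu_2-A_2u_2)$, discards the first piece by the monotonicity encoded in Tartar's inequality (Lemma \ref{lem:tartar}), and is then left with a consistency term along the \emph{linear} orbit $u_2$ only, namely $\int_\Omega\bigl||\nabla u_2|^{p-1}-|\nabla u_2|\bigr|\,|\nabla w|\,dx$, which the mean value theorem makes $O(|p-2|)$ wherever $\nabla u_2$ is bounded. Since $u_2$ solves the linear heat equation, one has the explicit representation \eqref{eq:fourmod} and classical parabolic smoothing, so the needed gradient regularity concerns a single, well-understood flow rather than the family $\{u_p\}$ uniformly in $p$. (The paper is itself somewhat cavalier about $\nabla u_2$ near $t=0$ for mere $H$-data, but that is a question about one linear semigroup; your route would demand the same and strictly more for the quasilinear flows, uniformly in $p$.) To salvage your decomposition you would have to prove the uniform $p+\delta$ higher-integrability estimate; otherwise, shift the consistency error onto $u_2$ as the paper does.
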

\begin{proof}
Let $u_p(t):= T_p(t)u_p^0$ and $u_2(t):=T_2(t)u_2^0$ and set $w(t) := u_p(t)- u_2(t)$. Then $\dfrac{dw}{dt} = A_2 u_2(t) - A_p u_p(t) + \lambda w$. 
From Tartar's inequality we obtain that
\begin{align*}
\langle A_p u_p - A_2 u_2, w \rangle & =  \langle  |\nabla u_p|^{p-2} \nabla u_p - |\nabla u_2|^{p-2} \nabla u_2, \nabla w \rangle + \langle|\nabla u_2|^{p-2} \nabla u_2- \nabla u_2, \nabla w \rangle \\
& \geq 2^{2-p} \| \nabla w \|_{H}^p + \langle|\nabla u_2|^{p-2} \nabla u_2- \nabla u_2, \nabla w \rangle. 
\end{align*}
So,
\begin{align*}
\frac12 \frac{d}{dt} \| w \|^2_{H} & \leq  - \langle|\nabla u_2|^{p-2} \nabla u_2- \nabla u_2, \nabla w \rangle + \lambda  \| w_p \|^2_{H} \\
& \le  \int_{\Omega}\left| |\nabla u_2|^{p-1} - |\nabla u_2| \right| \,|\nabla w|\, dx + \lambda \| w_p \|^2_{H}. 
\end{align*}

In order to estimate $\left| |\nabla u_2|^{p-1} - |\nabla u_2| \right|$, we observe that from the mean value Theorem one gets
$$
\left| |\nabla u_2|^{p-1} - |\nabla u_2| \right| = |\nabla u_2|^{r} \left| {\rm ln}|\nabla u_2| \right||p-2|,
$$
for some $r \in (1,p-1)$ as long as $|\nabla u_2| \neq 0$. 
It follows from uniform estimate in Theorem \ref{teo:CCD} and \eqref{eq:fourmod} that $|\nabla u_2 (t)|, |\nabla u_p(t)|$ are uniform bounded for $\|u_p^0\|_H,\, \|u_2^0\|_H \le R$ and $t \in [0,T]$.

Therefore
$$
\frac12 \frac{d}{dt} \| w (t)\|^2_{H} \le C|p-2| + \lambda \|w(t)\|^2_H, \quad t \in [0,T],
$$
Integrating this last inequality from $0$ to $t$ we obtain
$$
\| w (t)\|^2_{H} \le 2 C t|p-2| + \|u_p^0- u_2^0\|_H  +  2\lambda \int_0^t\|w(s)\|^2_H \, ds.
$$
By Gronwall's Inequality we conclude that
$$
\| w(t)\|^2_{H} \le C_1|p-2|+ C_2\|u_p^0- u_2^0\|_H ,
$$
for $\|u_p^0\|, \, \|u_2^0\|_H \le R$ and $t \in [0,T]$. 

\end{proof}
\subsection{Continuity of attractors}\label{sec:cont-attr}

The Hausdorff semi-distance of two subsets $\mathcal{A}, \mathcal{B}$ of a metric space $(X,d)$ is defined as
 $$
{\rm dist}_{\mathcal{H}}(\mathcal{A},\mathcal{B}):=\sup_{a\in \mathcal{A}} \inf_{b\in \mathcal{B}} d(a,b).
$$
Given a family $\{\mathcal{A_\lambda}\}_{\lambda \in \Lambda}$ of subsets of $X$, we say that $\{A_\lambda\}_{\lambda \in \Lambda}$ is upper semicontinuous at $\lambda=\lambda_0 \in \Lambda$ if,
$$
{\rm dist}_{\mathcal{H}}({\mathcal{A}}_\lambda,{\mathcal{A}}_{\lambda_0}) \stackrel{\lambda \to \lambda_0}{\longrightarrow} 0.
$$
We say that $\{A_\lambda\}_{\lambda \in \Lambda}$ is lower semicontinuous at $\lambda=\lambda_0 \in \Lambda$ if,
$$
{\rm dist}_{\mathcal{H}}({\mathcal{A}}_{\lambda_0},{\mathcal{A}}_{\lambda}) \stackrel{\lambda \to \lambda_0}{\longrightarrow} 0.
$$
We say that $\{A_\lambda\}_{\lambda \in \Lambda}$ is continuous at $\lambda=\lambda_0 \in \Lambda$ if is both upper and lower semicontinuous at $\lambda=\lambda_0$.

\begin{theorem}\cite{Ha:88} 
\begin{enumerate}
	\item  A family $\{\mathcal{A_\lambda}\}_{\lambda \in \Lambda}$ of subsets of $X$ is upper semicontinuous at $\lambda = \lambda_0 \in \Lambda$, if and only if, for any sequences $\lambda_n \stackrel{n \to \infty}{\longrightarrow} \lambda_0$ and $u_{\lambda_n} \in \mathcal{A}_{\lambda_n}$ there exists a subsequence of $\{\lambda_n\}$ (still denoted by $\{\lambda_n\}$) such that $u_{\lambda_n} \stackrel{n \to \infty}{\longrightarrow} u_{\lambda_0}$ for some $u_{\lambda_0} \in \mathcal{A}_{\lambda_0}$.
	
	\item Assuming $\mathcal{A}_{\lambda_0}$ a compact subset of $X$, a family $\{\mathcal{A_\lambda}\}_{\lambda \in \Lambda}$ of subsets of $X$ is lower semicontinuous at $\lambda = \lambda_0 \in \Lambda$, if and only if, for any sequence $\lambda_n \stackrel{n \to \infty}{\longrightarrow} \lambda_0$ and any $u_{\lambda_0} \in \mathcal{A}_{\lambda_0}$ there exists a subsequence of $\{\lambda_n\}$ (still denoted by $\{\lambda_n\}$) and a sequence $u_{\lambda_n} \in \mathcal{A}_{\lambda_n}$, such that $u_{\lambda_n} \stackrel{n \to \infty}{\longrightarrow} u_{\lambda_0}$.
\end{enumerate}	
\end{theorem}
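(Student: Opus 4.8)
The plan is to prove each of the two equivalences directly from the definition of the Hausdorff semi-distance, handling the two implications of each ``if and only if'' separately. In every case the passage from the metric statement (${\rm dist}_{\mathcal{H}}\to 0$) to the sequential statement I would obtain by a direct selection of near-minimisers, while for the converse I would argue by contradiction, manufacturing from the failure of semicontinuity a sequence that violates the sequential hypothesis. Throughout I write $d(\cdot,\cdot)$ for the metric on $X$; in the attractor application $X=H$.

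For the upper semicontinuity equivalence in part (1), the forward direction starts from ${\rm dist}_{\mathcal{H}}(\mathcal{A}_\lambda,\mathcal{A}_{\lambda_0})\to 0$. Given $\lambda_n\to\lambda_0$ and $u_{\lambda_n}\in\mathcal{A}_{\lambda_n}$, I would note that
\[
\inf_{a\in\mathcal{A}_{\lambda_0}} d(u_{\lambda_n},a) \;\le\; {\rm dist}_{\mathcal{H}}(\mathcal{A}_{\lambda_n},\mathcal{A}_{\lambda_0}) \stackrel{n\to\infty}{\longrightarrow} 0 ,
\]
pick $a_n\in\mathcal{A}_{\lambda_0}$ with $d(u_{\lambda_n},a_n)\to 0$, extract a convergent subsequence $a_{n_k}\to u_{\lambda_0}\in\mathcal{A}_{\lambda_0}$, and conclude that the corresponding subsequence of $u_{\lambda_n}$ converges to the same limit. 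For the converse I assume the sequential condition and suppose, for contradiction, that upper semicontinuity fails; then there are $\varepsilon>0$ and $\lambda_n\to\lambda_0$ with ${\rm dist}_{\mathcal{H}}(\mathcal{A}_{\lambda_n},\mathcal{A}_{\lambda_0})>\varepsilon$, so one may pick $u_{\lambda_n}\in\mathcal{A}_{\lambda_n}$ with $\inf_{a\in\mathcal{A}_{\lambda_0}}d(u_{\lambda_n},a)>\varepsilon$. The sequential hypothesis supplies a subsequence $u_{\lambda_{n_k}}\to u_{\lambda_0}\in\mathcal{A}_{\lambda_0}$, whence $\inf_{a}d(u_{\lambda_{n_k}},a)\le d(u_{\lambda_{n_k}},u_{\lambda_0})\to 0$, contradicting the separation.

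For the lower semicontinuity equivalence in part (2), where $\mathcal{A}_{\lambda_0}$ is compact, I would use ${\rm dist}_{\mathcal{H}}(\mathcal{A}_{\lambda_0},\mathcal{A}_\lambda)=\sup_{a\in\mathcal{A}_{\lambda_0}}\inf_{b\in\mathcal{A}_\lambda}d(a,b)$. The forward direction is again a direct selection: for fixed $u_{\lambda_0}\in\mathcal{A}_{\lambda_0}$ one has $\inf_{b\in\mathcal{A}_{\lambda_n}}d(u_{\lambda_0},b)\le {\rm dist}_{\mathcal{H}}(\mathcal{A}_{\lambda_0},\mathcal{A}_{\lambda_n})\to 0$, so choosing near-minimisers $u_{\lambda_n}\in\mathcal{A}_{\lambda_n}$ gives $u_{\lambda_n}\to u_{\lambda_0}$. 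For the converse I argue by contradiction: if lower semicontinuity fails there are $\varepsilon>0$, $\lambda_n\to\lambda_0$ and $a_n\in\mathcal{A}_{\lambda_0}$ with $\inf_{b\in\mathcal{A}_{\lambda_n}}d(a_n,b)>\varepsilon$; here I invoke the compactness of $\mathcal{A}_{\lambda_0}$ to pass to a subsequence with $a_n\to a\in\mathcal{A}_{\lambda_0}$, then apply the sequential hypothesis to the point $a$ to obtain $b_n\in\mathcal{A}_{\lambda_n}$ with $b_n\to a$, and conclude from $\inf_{b\in\mathcal{A}_{\lambda_n}}d(a_n,b)\le d(a_n,a)+d(a,b_n)\to 0$ a contradiction.

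The routine parts are the near-minimiser selections and the triangle-inequality estimates; the only genuinely delicate point is the role of compactness. In part (1) the extraction of a convergent subsequence of the $u_{\lambda_n}$ (equivalently of the $a_n$) requires that these points lie in a relatively compact subset of $X$ — which in the attractor application is guaranteed by the uniform $V$-bound of Theorem \ref{teo:CCD} together with the compact embedding $V\hookrightarrow H$, or simply by compactness of $\mathcal{A}_{\lambda_0}$ — and in part (2) the compactness of $\mathcal{A}_{\lambda_0}$ is exactly what lets one select $a_n\to a$ before appealing to the sequential hypothesis. Keeping track of the successive nested subsequences is the remaining bookkeeping to be careful about.
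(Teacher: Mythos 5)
The paper offers no proof of this statement at all --- it is quoted from Hale \cite{Ha:88} as a known result --- so there is no argument in the text for yours to parallel; judged on its own, your proof is the standard one and is essentially correct. Both ``if'' directions (sequential condition implies semicontinuity) are sound as you argue them by contradiction: from ${\rm dist}_{\mathcal{H}}>\varepsilon$ one can indeed select a single point realising separation $>\varepsilon$ from the limit set, and in part (2) passing to the subsequence supplied by the hypothesis is harmless because the separation $\inf_{b\in\mathcal{A}_{\lambda_n}}d(a_n,b)>\varepsilon$ persists along every subsequence. The near-minimiser selections in the ``only if'' directions are likewise routine; in fact in part (2) your selection gives convergence of the \emph{full} sequence $u_{\lambda_n}\to u_{\lambda_0}$, which is stronger than the stated subsequence conclusion.

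The one substantive issue is exactly the one you flagged: the ``only if'' direction of part (1) is false as literally stated, because the extraction $a_{n_k}\to u_{\lambda_0}$ has no basis without a compactness hypothesis. Concretely, take $X=\mathbb{R}$, $\Lambda=\{0\}\cup\{1/n: n\in\mathbb{N}\}$, $\mathcal{A}_0=\mathbb{R}$ and $\mathcal{A}_{1/n}=\{n\}$: then ${\rm dist}_{\mathcal{H}}(\mathcal{A}_{1/n},\mathcal{A}_0)=0$ for every $n$, so the family is upper semicontinuous at $\lambda_0=0$, yet $u_n=n$ has no convergent subsequence. So compactness of $\mathcal{A}_{\lambda_0}$ --- or relative compactness of $\bigcup_{\lambda}\mathcal{A}_\lambda$, which is what the paper actually uses in its application via the uniform $V$-bound of Theorem \ref{teo:CCD} and the compact embedding $W_0^{1,p}(\Omega)\hookrightarrow L^2(\Omega)$ --- is a genuine hypothesis of part (1), present in Hale's formulation and silently dropped in the statement as printed; it is not merely a convenience of your method. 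Your instinct to import it from the application context is the right repair, but it should be stated as a hypothesis of the theorem rather than invoked informally; with it restored, your proof of both parts is complete, since part (2) already assumes $\mathcal{A}_{\lambda_0}$ compact exactly where your argument needs it (the selection $a_n\to a$ before applying the sequential hypothesis at $a$).
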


\begin{theorem} \
\begin{enumerate}
	\item If $\lambda < \lambda_1$, the family of compact global attractors $\{\mathcal{A}_p\}_{p\geq 2} \subset H$ is continuous at $p=2$;
	
	\item If $\lambda_1 \le \lambda $ the family of global attractors $\{\mathcal{A}_p\}_{p\geq 2} \subset H$ is upper semicontinuous at $p=2$.
	\end{enumerate}
\end{theorem}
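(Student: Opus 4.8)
The plan is to reduce both assertions to the abstract characterisations of semicontinuity recalled from \cite{Ha:88}, the common ingredient being \emph{upper} semicontinuity at $p=2$, which I claim holds for every $\lambda \in \mathbb{R}$. First I would prove upper semicontinuity. Fix a sequence $p_n \to 2^+$ and points $u_{p_n} \in \mathcal{A}_{p_n}$; by the criterion of \cite{Ha:88} it suffices to extract a subsequence with $u_{p_n} \to u_* \in \mathcal{A}_2$. Choosing any $q>2$, Theorem \ref{teo:CCD} gives a radius $R$, uniform for $2<p\le q$, with $\mathcal{A}_p \subset \{\,\|\cdot\|_{W^{1,p}_0}<R\,\}$ (invariance of $\mathcal{A}_p$ together with $(H,V)$-dissipativity forces the whole attractor inside the absorbing ball). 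Since $p_n \ge 2$ and $\Omega$ is bounded, H\"older's inequality transports this into a uniform bound $\|u_{p_n}\|_{W^{1,2}_0} \le CR$, and the compact embedding $W^{1,2}_0(\Omega) \hookrightarrow\hookrightarrow H$ yields a subsequence with $u_{p_n} \to u_*$ in $H$.

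Next I would produce a bounded complete $T_2$-orbit through $u_*$. By Theorem \ref{teo:carac-att} each $u_{p_n}$ lies on a bounded complete orbit $\xi_n:\mathbb{R} \to \mathcal{A}_{p_n}$ with $\xi_n(0)=u_{p_n}$, and by the argument above $\|\xi_n(t)\|_H \le CR$ for all $t$ and $n$. For each rational $s$ the family $\{\xi_n(s)\}_n$ is relatively compact in $H$, so a diagonal extraction over $s \in \mathbb{Q}$ gives $\xi_n(s) \to \psi(s)$ in $H$ for every rational $s$, with $\|\psi(s)\|_H \le CR$. For rationals $s \le t$ one has $\xi_n(t)=T_{p_n}(t-s)\xi_n(s)$; letting $n\to\infty$ and using the flow-convergence estimate \eqref{eq:4.1} of Theorem \ref{theo:4.2} (applicable since the data are bounded by $CR$) gives $\psi(t)=T_2(t-s)\psi(s)$. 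Setting $\xi(t):=T_2(t-s)\psi(s)$ for any rational $s \le t$ defines, by the semigroup property, a continuous complete $T_2$-orbit with $\xi(0)=u_*$; as $\xi$ agrees with $\psi$ on the dense set $\mathbb{Q}$, it is bounded. Finally Theorem \ref{teo:carac-att-unbou}, applied to the bounded negative semiorbit $\xi((-\infty,0])$, gives $u_* \in \mathcal{A}_2$. This establishes upper semicontinuity, hence statement (2).

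For statement (1), with $\lambda < \lambda_1$, I additionally need lower semicontinuity. Here Proposition \ref{prop:attrac2}(1) gives $\mathcal{A}_2=\{u_2^*\}$, a compact singleton. Since the equilibrium $u_p^*$ is a constant bounded complete orbit, $u_p^* \in \mathcal{A}_p$, and Theorem \ref{theo:4.1} gives $\|u_p^*-u_2^*\|_V \to 0$, hence $\|u_p^*-u_2^*\|_H \to 0$. Therefore ${\rm dist}_{\mathcal{H}}(\mathcal{A}_2,\mathcal{A}_p) \le \|u_2^*-u_p^*\|_H \to 0$, which is lower semicontinuity; combined with the upper semicontinuity above this yields continuity at $p=2$. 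In case (2), $\lambda \ge \lambda_1$, the set $\mathcal{A}_2 \supset \mathcal{A}_2^u={\rm span}\{\varphi_1,\dots,\varphi_{N(\lambda)}\}$ is unbounded while every $\mathcal{A}_p$ is bounded, so ${\rm dist}_{\mathcal{H}}(\mathcal{A}_2,\mathcal{A}_p)=\infty$ and lower semicontinuity genuinely fails; only upper semicontinuity survives, as asserted.

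The hard part will be the construction and control of the limiting complete orbit in the last two steps of the upper-semicontinuity argument. Two points deserve care: the space $V=W^{1,p}_0(\Omega)$ varies with $p$, so the uniform bound must first be transported to the fixed space $W^{1,2}_0(\Omega)$ before any compact embedding is invoked; and, crucially, because in case (2) the limit attractor $\mathcal{A}_2$ is unbounded, one cannot appeal to compactness of $\mathcal{A}_2$ but must instead place the limit point inside it through the bounded-complete-orbit characterisation of Theorem \ref{teo:carac-att-unbou} — which is precisely what the diagonal-plus-flow-convergence construction is engineered to provide.
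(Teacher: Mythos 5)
Your proposal is correct and follows essentially the same route as the paper: upper semicontinuity via the uniform $V$-bound of Theorem \ref{teo:CCD} giving precompactness of $\bigcup_{2<p\le q}\mathcal{A}_p$ in $H$, then construction of a past-bounded complete limit orbit from the orbits $\phi_{p_n}$ using the flow convergence of Theorem \ref{theo:4.2} and concluding with Theorem \ref{teo:carac-att-unbou}, while part (1) reduces to the equilibrium convergence of Theorem \ref{theo:4.1}. Your rendering is if anything slightly more careful than the paper's (explicit H\"older transport of the $W^{1,p}_0$ bound to $W^{1,2}_0$, and a diagonal extraction over rational times that guarantees the negative-time limits are consistent along a single subsequence, a point the paper glosses over).
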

\begin{proof} \
\begin{enumerate}
	\item This is consequence of Theorem \ref{theo:4.1}.
	
	\item  Let us consider sequences $p_n \stackrel{n \to \infty}{\longrightarrow} 2^+$ and $u_{p_n} \in \mathcal{A}_{p_n}$, $n \in \mathbb{N}$. 
By the a priori estimate in Theorem \ref{teo:CCD} it follows that $\displaystyle \overline{ \bigcup_{2<p\le 3}\mathcal{A}_p}$ is a compact subset of $H$, therefore (passing to a subsequence if necessary) $u_{p_n} \stackrel{n \to \infty}{\longrightarrow} u_2$ for some $u_2 \in H$. It remains to show that $u_2 \in \mathcal{A}_2$. 
For this, it is enough to prove the existence of a complete orbit for $\{T_2(t)\}_{t\geq 0}$ through $u_2$ bounded in the past.

For each $n \in \mathbb{N}$, there exists a bounded complete orbit $\phi_{p_n}:\mathbb{R} \to H$ for $\{T_{p_n}(t)\}_{t\geq 0}$ through $u_{p_n}$.
For $t \geq 0$, it follows from the continuity of the semigroups in Theorem \ref{theo:4.2} that $\phi_{p_n}(t)=T_{p_n}(t)u_{p_n} \stackrel{n \to \infty}{\longrightarrow} T_{2}(t)u_{2}$ in $H$.

For $t<0$, we set up the orbit through $u_{2}$ in the following way: for $t \in (-k,-k+1]$, $k \in \mathbb{N}$, we consider the sequence $\{\phi_{p_n}(-k)\}_{n \in \mathbb{N}}$ in $\displaystyle \bigcup_{2 <p \le 3}\mathcal{A}_p$. 
Following the previously argument used for the sequence $\{u_{p_n}\}_{n\in \mathbb{N}}$, we obtain that $\phi_{p_n}(-k) \stackrel{n \to \infty}{\longrightarrow} \tilde\phi_{2}(-k)$ in $H$. 
Hence,
$$
\phi_{p_n}(t) = T_{p_n}(t+k)\phi_{p_n}(-k) \stackrel{n \to \infty}{\longrightarrow}T_{2}(t+k)\tilde\phi_{2}(-k).
$$
Finally, defining $\phi_{2}: \mathbb{R} \to H$ by
$$
\phi_{2}(t)= \begin{cases}
	T_2(t)u_2, \quad t\geq 0, \\
	T_{2}(t+k)\tilde\phi_{2}(-k), \quad t \in [-k,-k+1), \, k=1,2, \cdots, 
			\end{cases}
$$
one obtains a complete orbit for $\{T_{2}(t)\}_{t\geq 0}$ through $u_2$ such that $\phi_{2}((-\infty,0]) \subset H$ is bounded. The result follows now from Theorem \ref{teo:carac-att-unbou}.
\end{enumerate}	
\end{proof}

\begin{remark}
To address to the lower semicontinuity of the family of compact global attractors for  dissipative systems possessing a Lyapunov function (gradient systems), we can indicate the general scheme developed in \cite{AP:06, HR:89}.  
Such scheme is often applied in the case of semilinear equations (see \cite{CLR:12}) by assuming some robustness on the structure of the equilibria set, e.g. hyperbo\-li\-ci\-ty. 
However, for quasilinear equations still it is an open problem how to apply it. We hope to address to this issue in a further paper. 	
\end{remark}
\begin{appendix}
\section{Appendix}	

For reader's convenience we collect some of the no so standard inequalities that we had use in the paper.

\begin{lemma}[Tartar's Inequality, \cite{Sa:85}, Lemma 3.1]\label{lem:tartar}
For all $\xi, \eta \in \mathbb{R}^n$ and $p\geq 2$ the following inequality hold
$$
2^{2-p} |\xi - \eta|^p \le (|\xi|^{p-2}\xi - |\eta|^{p-2}\eta) \cdot (\xi - \eta).
$$	
\end{lemma}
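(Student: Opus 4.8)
The plan is to prove the inequality by reducing the vector statement in $\mathbb{R}^n$ to a genuinely one–dimensional one through an orthogonal decomposition along the difference vector, and then to settle that scalar inequality by an elementary monotonicity argument. First I would dispose of the trivial case $\xi=\eta$ (both sides vanish) and assume $h:=\xi-\eta\neq 0$, writing $\hat h:=h/|h|$ and $V(z):=|z|^{p-2}z$. The key observation is that $\xi$ and $\eta$ have the \emph{same} component orthogonal to $\hat h$, since $\xi-\eta=h$ is parallel to $\hat h$. Writing $s_2:=\xi\cdot\hat h$, $s_1:=\eta\cdot\hat h$ and letting $c\ge 0$ be the common length of that orthogonal component, we have $|\xi|^2=s_2^2+c^2$, $|\eta|^2=s_1^2+c^2$ and $s_2-s_1=h\cdot\hat h=|h|>0$. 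Dotting $V(\xi)-V(\eta)$ against $h=|h|\hat h$ picks out only the $\hat h$–component, so a direct expansion gives
\[
(V(\xi)-V(\eta))\cdot(\xi-\eta)=\big[\psi(s_2)-\psi(s_1)\big]\,(s_2-s_1),\qquad \psi(s):=s\,(s^2+c^2)^{(p-2)/2}.
\]
Dividing by $s_2-s_1=|h|>0$, the claim reduces to the scalar inequality $\psi(s_2)-\psi(s_1)\ge 2^{2-p}(s_2-s_1)^{p-1}$ for $s_1<s_2$, with $c\ge 0$ a parameter.

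Next I would symmetrize: set $s_1=t-r$, $s_2=t+r$ with $r=(s_2-s_1)/2>0$, so the target becomes $G(t):=\psi(t+r)-\psi(t-r)\ge 2r^{p-1}$ for every $t\in\mathbb{R}$. Since $\psi$ is odd in $s$, $G$ is even in $t$; and since $\psi'(s)=(s^2+c^2)^{(p-4)/2}\big[(p-1)s^2+c^2\big]$ is even and nondecreasing on $[0,\infty)$ — a short computation gives $\psi''(s)=(p-2)\,s\,(s^2+c^2)^{(p-6)/2}\big[(p-1)s^2+3c^2\big]\ge0$ for $s>0$ when $p\ge 2$ — together with the elementary fact that $t+r>|t-r|$ for $t>0$, one gets $G'(t)\ge 0$ on $(0,\infty)$. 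Hence $G$ attains its minimum at $t=0$, where
\[
G(0)=2\psi(r)=2r\,(r^2+c^2)^{(p-2)/2}\ge 2r^{p-1},
\]
the last step using $r^2+c^2\ge r^2$ and $(p-2)/2\ge0$. This is exactly $G(t)\ge 2r^{p-1}$, which unwinds to the desired bound.

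The step I expect to be the main obstacle — and the reason a softer argument fails — is the \emph{sharpness} of the constant $2^{2-p}$: equality is forced at the antipodal configuration $\xi=-\eta$. Any attempt to bound $(V(\xi)-V(\eta))\cdot(\xi-\eta)$ from below by retaining only the isotropic part $|z|^{p-2}|h|^2$ of the Hessian of $\tfrac1p|z|^p$ (equivalently, discarding the nonnegative term $(p-2)|z|^{p-4}(z\cdot h)^2$) destroys the constant precisely there, since at $\xi=-\eta$ the displacement $h$ is parallel to $z$ and the discarded term carries all the weight; one then loses a spurious factor $(p-1)^{-1}$. The orthogonal decomposition above sidesteps this loss by keeping the parallel direction fully intact through $\psi$, so that the entire difficulty is concentrated in the one–variable monotonicity of $\psi'$, which is where I would invest the care.
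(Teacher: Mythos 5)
Your proof is correct; I verified each step. The orthogonal decomposition is legitimate (since $\xi-\eta$ is parallel to $\hat h$, the components of $\xi$ and $\eta$ orthogonal to $\hat h$ coincide), the identity $(V(\xi)-V(\eta))\cdot(\xi-\eta)=\bigl[\psi(s_2)-\psi(s_1)\bigr](s_2-s_1)$ with $\psi(s)=s\,(s^2+c^2)^{(p-2)/2}$ is exact, your formulas for $\psi'$ and $\psi''$ check out, and the endgame $G(t)\ge G(0)=2\psi(r)\ge 2r^{p-1}$ unwinds to precisely the constant $2^{2-p}$ because $2^{2-p}(2r)^{p-1}=2r^{p-1}$. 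Note, though, that the paper itself contains no proof of this lemma: it is quoted verbatim from Sakaguchi \cite{Sa:85}, so yours is a genuinely independent argument, and it differs from the standard one in that literature, which is purely algebraic: one expands $\bigl(|\xi|^{p-2}\xi-|\eta|^{p-2}\eta\bigr)\cdot(\xi-\eta)=\tfrac12\bigl(|\xi|^{p-2}+|\eta|^{p-2}\bigr)|\xi-\eta|^2+\tfrac12\bigl(|\xi|^{p-2}-|\eta|^{p-2}\bigr)\bigl(|\xi|^2-|\eta|^2\bigr)$, observes both terms are nonnegative, then controls $|\xi-\eta|^{p-2}\le(|\xi|+|\eta|)^{p-2}$ using convexity of $t\mapsto t^{p-2}$ when $p\ge 3$ and subadditivity when $2\le p\le 3$. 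That route is shorter but needs a case split at $p=3$ and obscures where equality holds; your one-dimensional reduction avoids any case split, makes the extremal configuration $\xi=-\eta$ (that is, $t=0$, $c=0$) transparent, and your closing diagnosis of the spurious $(p-1)^{-1}$ loss in the isotropic-Hessian approach is accurate. One cosmetic caveat: when $c=0$ and $2<p<6$ the factor $(s^2+c^2)^{(p-6)/2}$ in your expression for $\psi''$ is singular at $s=0$, so to conclude that $\psi'$ is nondecreasing on all of $[0,\infty)$ you should either treat that case directly via $\psi'(s)=(p-1)s^{p-2}$, or argue from $\psi''\ge 0$ on $(0,\infty)$ together with continuity of $\psi'$ up to $s=0$ — which is all your argument actually uses, so nothing breaks.
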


\begin{lemma}[Ghidaglia's Inequality, \cite{Te:88}, III-Lemma 5.1]\label{lem:ghidaglia} Let $y:(0,\infty) \to \mathbb{R}$ be a positive absolutely continuous function which satisfies
$$
\dfrac{dy}{dt} + \gamma y^{p/2} \le \delta, 
$$
with $p>2$, $\gamma >0$ and $\delta \geq 0$. Then 
$$
y(t) \le \left(\dfrac{\delta}{\gamma} \right)^{2/p} + \left(\dfrac{\gamma(p-2)}{2}t \right)^{-2/(p-2)}, \quad \forall \, t>0.
$$	
\end{lemma}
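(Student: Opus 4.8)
The plan is to reduce the scalar inequality to an autonomous Bernoulli-type inequality by subtracting the equilibrium level of the comparison ODE. Write $\beta := p/2 > 1$, let $y_\infty := (\delta/\gamma)^{2/p}$ be the unique nonnegative root of $\gamma u^\beta = \delta$ (the stationary value of $u' = \delta - \gamma u^\beta$), and set $m := 2/(p-2) = 1/(\beta-1)$. Since $\gamma(\beta-1) = \gamma(p-2)/2$, the target estimate reads $y(t) \le y_\infty + (\gamma(\beta-1)t)^{-m}$. I would fix an arbitrary $t > 0$ and split into cases according to whether $y(t)$ lies below or above $y_\infty$.

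If $y(t) \le y_\infty$ the bound is immediate, because the second term on the right is strictly positive. So suppose $y(t) > y_\infty$. First I would show that then $y(s) > y_\infty$ for every $s \in (0,t]$: otherwise, since $y$ is absolutely continuous (hence continuous), there is a largest $t_* < t$ with $y(t_*) = y_\infty$, and on $(t_*,t]$ one has $y > y_\infty$, so $\gamma y^\beta > \gamma y_\infty^\beta = \delta$ and therefore $y' \le \delta - \gamma y^\beta < 0$ a.e.; thus $y$ is strictly decreasing on $(t_*,t]$ and $y(t) < y(t_*) = y_\infty$, contradicting $y(t) > y_\infty$.

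On the interval $(0,t]$, where now $y > y_\infty$, I introduce $v := y - y_\infty > 0$. Using $\gamma y_\infty^\beta = \delta$ together with the superadditivity inequality $(a+b)^\beta \ge a^\beta + b^\beta$, valid for $a,b \ge 0$ and $\beta \ge 1$, I obtain $v' = y' \le \delta - \gamma y^\beta = -\gamma\big((v+y_\infty)^\beta - y_\infty^\beta\big) \le -\gamma v^\beta$ a.e. on $(0,t]$, which is the desired autonomous inequality. Differentiating $v^{1-\beta}$ gives $(v^{1-\beta})' = (1-\beta)v^{-\beta}v' \ge \gamma(\beta-1)$, the reversal of the inequality coming from the negative factor $(1-\beta)v^{-\beta}$.

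Integrating this from $\epsilon$ to $t$ and discarding the positive term $v(\epsilon)^{1-\beta}$ yields $v(t)^{1-\beta} \ge \gamma(\beta-1)(t-\epsilon)$; letting $\epsilon \to 0^+$ gives $v(t)^{1-\beta} \ge \gamma(\beta-1)t$, and raising to the negative power $1/(1-\beta) = -m$ (which reverses the inequality, $x\mapsto x^{-m}$ being decreasing on $(0,\infty)$) produces $v(t) \le (\gamma(\beta-1)t)^{-m} = (\gamma(p-2)t/2)^{-2/(p-2)}$. Adding back $y_\infty$ recovers the claimed bound. The two genuinely delicate points are the superadditivity step, which is precisely what makes the $y_\infty$-shifted inequality autonomous and thus integrable in closed form, and the passage $\epsilon \to 0^+$: because $y$ need only be absolutely continuous on compact subintervals of $(0,\infty)$, it may blow up as $t \to 0^+$, but since the term $v(\epsilon)^{1-\beta} > 0$ can simply be dropped, the resulting estimate is independent of any initial datum, which is exactly why the bound holds for all $t > 0$ with no hypothesis imposed at $t = 0$.
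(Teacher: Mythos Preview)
The paper does not supply its own proof of this lemma; it is stated in the appendix with a citation to Temam's book and left unproved. There is therefore nothing in the paper to compare against directly.

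Your argument is correct and is essentially the standard route to this estimate. The key observations --- that once $y(t)>y_\infty$ the trajectory must have satisfied $y>y_\infty$ on all of $(0,t]$, and that the shifted variable $v=y-y_\infty$ then obeys the autonomous inequality $v'\le -\gamma v^{\beta}$ thanks to the superadditivity $(a+b)^{\beta}\ge a^{\beta}+b^{\beta}$ for $\beta\ge 1$ --- are exactly what makes the computation close. Your treatment of the endpoint, integrating from $\epsilon$ to $t$ and discarding the positive term $v(\epsilon)^{1-\beta}$ before sending $\epsilon\to 0^{+}$, is the right way to handle the hypothesis that $y$ is only locally absolutely continuous on $(0,\infty)$, and it is precisely why the final bound carries no dependence on any initial value.
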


\end{appendix}

\textbf{Acknowledgements:}

 R.P.S. is partially supported by FAPDF $\#193.001.372/2016$.


\end{document}